\numberwithin{equation}{section}
\newtheorem{theorem}{Theorem}[section]
\newtheorem{thm}[theorem]{Theorem}
\newtheorem{lem}[theorem]{Lemma}
\theoremstyle{definition}
\newtheorem{defn}[theorem]{Definition}
\theoremstyle{remark}
\newtheorem{rem}[theorem]{Remark}
\newcommand{\Spec}{\operatorname{Spec}}
\renewcommand{\emptyset}{\varnothing}
\newcommand{\Diag}{\operatorname{diag}}
\renewcommand\footnotemark{}
\begin{document}

%
%
%
%
%
%
%
%
%

\title{A two-parameter eigenvalue problem for a class of block-operator matrices\footnote{Published in 
B\"{o}ttcher A., Potts D., Stollmann P., Wenzel D. (eds), \emph{The Diversity and Beauty of Applied Operator Theory}. Operator Theory: Advances and Applications, vol 268. Birkh\"{a}user, Cham, 2018; pp 367--380}\footnote{\url{https://doi.org/10.1007/978-3-319-75996-8_19}}
\footnote{MSC(2010) Primary 15A18; Secondary 47A25.}}

\author{Michael Levitin
\thanks{Authors' address: Department of Mathematics and Statistics,
University of Reading, Whiteknights, PO Box 220, Reading RG6 6AX, UK.}
\thanks{ML:\ M.Levitin@reading.ac.uk; HM\"{O}:\ H.M.Ozturk@pgr.reading.ac.uk.}
\and Hasen Mekki \"{O}zt\"{u}rk\
\thanks{Keywords: multiparametric spectral problems, eigenvalues, block-operator  matrices, non-self-adjoint problems.}
}

\date{}

\maketitle

\begin{abstract}
We consider a symmetric block operator spectral problem with two spectral parameters. Under some reasonable restrictions, we state localisation theorems for the pair-eigenvalues and discuss relations to a class of non-self-adjoint spectral problems.
\end{abstract}

\section{\label{sec:6}Introduction}

The Multiparameter Eigenvalue Problems (MEPs) are the generalisation
of the one-parameter standard eigenvalue problem $M\mathbf{x}=\lambda\mathbf{x}$
and the generalised one-parameter eigenvalue problem $M\mathbf{x}=\lambda V\mathbf{x}$.
MEPs can be written in the following abstract form: 
\begin{equation}
M\mathbf{x}=\sum_{i=1}^{k}\lambda_{i}V_{i}\mathbf{x},\label{eq:1.1}
\end{equation}
where $\lambda_{i}\in\mathbb{C}$, $i=1,2,\ldots,k$, are spectral parameters, and $M$ and $V_{i}$ are self-adjoint linear operators
in some Hilbert space $\mathcal{H}$. Then $\lambda=(\lambda_{1},\ldots,\lambda_{k})$
is called a \emph{multi-parametric eigenvalue} (or \emph{k-tuple}, or \emph{eigentuple}) if there exists an $\mathbf{x}\in\mathcal{H}\setminus  \{ 0  \} $, 
called an \emph{eigenvector}, such that \eqref{eq:1.1} holds.  

MEPs arise in numerous applications, in particular in mathematical physics
when the method of separation of variables is used to solve boundary
value problems for partial differential equations. In the 1960s, an abstract
algebraic setting for MEPs was introduced by Atkinson \cite{Atkinson1968,Atkinson1972}, see also \cite{AtkinsonMingarelli,Sleeman}  and references therein.

In this paper, we consider a special class of two-parameter eigenvalue problems in a block-operator setting. 
Let $\mathcal{H}_{1}$ and $\mathcal{H}_{2}$ be Hilbert spaces. Let in \eqref{eq:1.1},  with $k=2$, 
\[
\mathbf{x}=\begin{pmatrix}\mathbf{u}\\\mathbf{v}\end{pmatrix}\in  \mathcal{H}_{1}\oplus \mathcal{H}_{2},
\]
and 
\[
M=\begin{pmatrix}
A & C\\
C^{*} & B
\end{pmatrix},
\qquad V_{1}=\begin{pmatrix}
I & 0\\
0 & 0
\end{pmatrix},
\qquad V_{2}=\begin{pmatrix}
0 & 0\\
0 & I
\end{pmatrix},
\]
where $A$, $B$ are self-adjoint operators in the Hilbert spaces
$\mathcal{H}_{1}$, $\mathcal{H}_{2}$, respectively, and $C$ is
a linear operator from $\mathcal{H}_{2}$ to $\mathcal{H}_{1}$.  Hence the equation \eqref{eq:1.1} becomes 
\begin{equation}\label{eq:1.2}
M(\alpha,\beta)\begin{pmatrix}\mathbf{u}\\
\mathbf{v}
\end{pmatrix}=\begin{pmatrix}
A-\alpha & C\\
C^{*} & B-\beta
\end{pmatrix}
\begin{pmatrix}\mathbf{u}\\
\mathbf{v}
\end{pmatrix}=\mathbf{0}.
\end{equation}

In this paper, the operators $A$, $B$ and $C$ are assumed to be bounded, with further restrictions imposed starting from section \ref{sec:basics}. The case of unbounded operators will be considered elsewhere.

\begin{defn}\label{def:Alpha}
We call $(\alpha,\beta)\in\mathbb{C}^{2}$
a \emph{pair-eigenvalue} of $M$ if there exists a non-trivial solution
$\displaystyle\begin{pmatrix}\mathbf{u}\\
\mathbf{v}
\end{pmatrix}\in\mathcal{H}$ of \eqref{eq:1.2}. We denote by $\Spec_{p}(M)$
the set of all  pair-eigenvalues of $M$. If both $\alpha, \beta\in\mathbb{R}$, then we will call $(\alpha,\beta)$
a \emph{real pair-eigenvalue} of \eqref{eq:1.2}.
\end{defn}

The equation \eqref{eq:1.2} can be re-written as 
\begin{align}
(A-\alpha)\mathbf{u} & =-C\mathbf{v},\label{eq:1.3}\\
(B-\beta)\mathbf{v} & =-C^{*}\mathbf{u}.\label{eq:1.4}
\end{align}
If $\alpha\notin\Spec(A)$, then 
\eqref{eq:1.3} can be re-written as $\mathbf{u}=-(A-\alpha)^{-1}C\mathbf{v}$,
and substituting this into \eqref{eq:1.4} yields 
\begin{equation}
(B-C^{*}(A-\alpha)^{-1}C)\mathbf{v}=\beta\mathbf{v}.\label{eq:1.5}
\end{equation}
This also means that if $\alpha\notin\Spec(A)$
and $\beta(\alpha)$ is an eigenvalue of 
\[
B-C^{*}(A-\alpha)^{-1}C,
\]
then $(\alpha,\beta(\alpha))\in\Spec_p(M)$. 

\section{Basics and statements}\label{sec:basics}
\subsection{Restrictions and notation}

Suppose that $\mathcal{H}_{1}$ and $\mathcal{H}_{2}$ are finite
dimensional, and therefore we are dealing with matrices. In addition,
for simplicity, take $\mathcal{H}_{1}=\mathcal{H}_{2}=\mathcal{H}$ and $\dim\mathcal{H}=n$. Our main results (Theorem \ref{thm:FullChess} and its special case Theorem \ref{thm:SimpleChess}) are stated below. 

\begin{rem} Most of our results transfer rather seamlessly to the cases when $\mathcal{H}_{1}$ and $\mathcal{H}_{2}$  have either different finite dimensions, or are infinite dimensional, but we exclude these from this paper for clarity.
\end{rem}

The eigenvalues
of $A$ and $B$ will be denoted by 
\[
\alpha_{1} \leq  \ldots\leq\alpha_{n}, \qquad \beta_{1}  \leq  \ldots\leq\beta_{n},
\]
respectively, and their corresponding eigenvectors will be denoted
by $\boldsymbol{\varphi}_{j}$ and $\boldsymbol{\psi}_{j}$, $j=1,\dots,n$.

In stating most of our results, we restrict our attention to the case where $C$ has
rank one. Take $C=\kappa P$, where $\kappa\in\mathbb{R}$,
and $P$ is a projection onto a one-dimensional subspace $Z=\mathrm{Span}\{{\mathbf{z}}\}$, $\|\mathbf{z}\|=1$. In the basis $\{ \boldsymbol{\varphi}_j\}$, $P$ will have the matrix representation $\left (\langle \mathbf{z},\boldsymbol{\varphi}_k \rangle \langle \mathbf{z},\boldsymbol{\varphi}_j \rangle \right)_{k,j=1}^n$. 
The equation \eqref{eq:1.2} then becomes
\begin{equation}\label{eq:2.1}
\begin{pmatrix}
A-\alpha & \kappa P\\
\kappa P & B-\beta
\end{pmatrix}\begin{pmatrix}
\mathbf{u}\\
\mathbf{v}
\end{pmatrix}=\mathbf{0},
\end{equation}
and \eqref{eq:1.5} becomes 
\begin{equation}\label{eq:2.2}
(B-\kappa^2 P (A-\alpha)^{-1}P)\mathbf{v}=\beta\mathbf{v}.
\end{equation}
Thus, by \eqref{eq:2.2}, for every $\alpha\in\mathbb{C}\setminus\Spec(A)$,
there are $n$ complex values $\beta(\alpha)$, and the corresponding curves are continuous in $\alpha$.

Let $\Phi_{X,\lambda}$ denote the eigenspace of a self-adjoint operator $X$ corresponding
to an eigenvalue $\lambda$, simple or multiple. Further denote 
\begin{align*}
\Gamma_{X} & :=  \{ \lambda\in\Spec(X)\mid\exists\boldsymbol{\varphi}\in\Phi_{X,\lambda}:\boldsymbol{\varphi}\neq0\; \text{and}\;\langle \mathbf{z},\boldsymbol{\varphi}\rangle =0\} ,\\
\widetilde{\Gamma}_{X} & :=  \{ \lambda\in\Spec(X)\mid \langle \mathbf{z},\boldsymbol{\varphi}\rangle =0\ \forall\boldsymbol{\varphi}\in\Phi_{X,\lambda}\}.
\end{align*}
Note that $\widetilde{\Gamma}_{X}\subseteq \Gamma_{X}$. If $\lambda$ is a simple eigenvalue of $X$, then $\lambda\in\Gamma_{X}\iff\lambda\in\widetilde{\Gamma}_{X}$. Also,  $\Gamma_{X}$  contains all the multiple eigenvalues of $X$.

Let $Q:=I-P$ be the orthogonal projection onto $Z^\bot$. For a self-adjoint operator $X: \mathcal{H}\to\mathcal{H}$, denote  
\begin{align*}
X_{\Vert,\Vert} & =  \left. PX  \right |_Z\ : Z\to Z, \qquad X_{\bot,\Vert}  =  \left. PX  \right |_{Z^\bot}\ :Z^\bot\to Z,\\
X_{\Vert,\bot} & =  \left. QX  \right |_Z\ :Z\to Z^\bot, \quad X_{\bot,\bot} =  \left. QX  \right |_{Z^\bot}\ :Z^\bot\to Z^\bot.
\end{align*}
The eigenvalues
of $A_{\bot,\bot}$ and $B_{\bot,\bot}$ will be denoted by 
\begin{eqnarray*}
\widehat{\alpha}_{1}  \leq  \ldots\leq\widehat{\alpha}_{n-1}, \qquad \widehat{\beta}_{1}  \leq  \ldots\leq\widehat{\beta}_{n-1},
\end{eqnarray*}
respectively, and their corresponding eigenvectors will be denoted
by $\widehat{\boldsymbol{\varphi}}_{k}$ and $\widehat{\boldsymbol{\psi}}_{k}$, $k=1,\dots,n-1$.

\begin{rem}
By the variational principle, the eigenvalues of $A$ and $A_{\bot,\bot}$
interlace,
\begin{equation}
\alpha_{k}\leq\widehat{\alpha}_{k}\leq\alpha_{k+1},\label{eq:2.3}
\end{equation}
and similarly the eigenvalues of $B$ and $B_{\bot,\bot}$ interlace,
\[
\beta_{k}\leq\widehat{\beta}_{k}\leq\beta_{k+1}.
\]
\end{rem}

\subsection{Statement of the simple Chess Board Theorem}

Assume for the moment that $\Gamma_{A}=\Gamma_{B}=\emptyset$, which in particular implies that all the eigenvalues of $A$ and $B$ are simple.
Denote
\[
x_{0}:=-\infty, \quad\quad
x_{2n}:=\infty,\quad\quad
x_{2j-1}:=\alpha_{j},\quad \quad
x_{2k}:=\widehat{\alpha}_{k},
\]
and similarly for $\beta$, 
\[
y_{0}:=-\infty, \quad\quad
y_{2n}:=\infty,\quad\quad
y_{2j-1}:=\beta_{j},\quad \quad
y_{2k}:=\widehat{\beta}_{k},
\]
where $j=1,\dotsc,n$ and $k=1,\dotsc,n-1$. Then, the numbers 
$x_{0},\dotsc,x_{2n}$  
divide the $\alpha$-line into $2n$ intervals, finite or infinite, and similarly for $\beta$.
Combination of these lines
divides the $(\alpha,\beta)$-plane into rectangles, some
of them semi-infinite,
\[
R_{p,q}:=r_{p}\times r_{q},\quad r_{p}:=(x_{p-1},x_{p}),\quad
r_{q}:=(y_{q-1},y_{q}),\quad p, q=1,\dots,2n,
\]
see Figure \ref{fig:1}. 

\begin{figure}[!htb]
\begin{centering}
\includegraphics[width=0.7\textwidth]{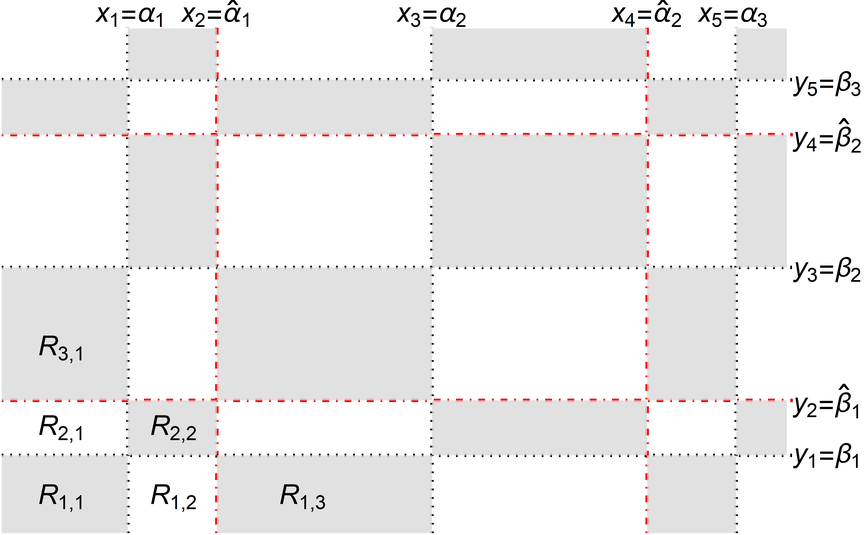}
\par\end{centering}
\caption{\label{fig:1} 
In the $(\alpha,\beta)$-plane, black
dotted and red dot-dashed lines in the vertical direction represent $\alpha=\alpha_{j}$
and $\alpha=\widehat{\alpha}_{k}$, respectively; in the horizontal direction
they represent $\beta=\beta_{j}$ and $\beta=\widehat{\beta}_{k}$, respectively. Here $n=3$, and the rectangles $R_{p,q}$ with even $p+q$ are shaded.}
\end{figure}

\begin{thm}[The Simple Chess Board Theorem]\label{thm:SimpleChess} 
Let $\Gamma_{A}=\Gamma_{B}=\emptyset$. Then all the real pair-eigenvalues
$(\alpha,\beta)$ of $M$ lie on a family of curves $(\alpha,\beta(\alpha))$ with the following properties:
\begin{enumerate}
\item[(a)] each curve may pass only through rectangles $R_{p,q}$ with $p+q$ even.
\item[(b)] each curve may cross from rectangle to rectangle only through the corner points $\left(x_{i},y_{j}\right)$ with $i+j$ odd;
\item[(c)] each curve $\beta(\alpha)$ is continuous in $\alpha$ except at eigenvalues of $A$; at each eigenvalue of $A$ exactly one curve blows up in the following sense:
$\beta(\alpha)  \rightarrow\pm\infty$ as $\alpha \rightarrow\alpha_{i}\pm0$, $\alpha_i\in\Spec(A)$;
\item[(d)] each curve $\beta(\alpha)$ is monotone decreasing in $\alpha$ on its domain of continuity; more precisely, we have
\begin{equation}\label{eq:mon}
\frac{d\beta}{d\alpha}=-\kappa^{2}\frac{\langle (A-\alpha)^{-2}\mathbf{z},\mathbf{z}\rangle (\langle (B-\beta)^{-1}\mathbf{z},\mathbf{z}\rangle)^{2}}{\langle (B-\beta)^{-2}\mathbf{z},\mathbf{z}\rangle }<0;
\end{equation}
\end{enumerate}
\end{thm}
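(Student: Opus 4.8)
The plan is to analyze the scalar function that implicitly defines the curves $\beta(\alpha)$. Starting from \eqref{eq:2.2} and noting that $P = |\mathbf{z}\rangle\langle\mathbf{z}|$ has rank one, the operator $\kappa^2 P(A-\alpha)^{-1}P$ equals $\kappa^2 \langle (A-\alpha)^{-1}\mathbf{z},\mathbf{z}\rangle\, P$. So for $\alpha\notin\Spec(A)$, a number $\beta$ is an eigenvalue of $B - \kappa^2\langle (A-\alpha)^{-1}\mathbf{z},\mathbf{z}\rangle P$ with eigenvector $\mathbf{v}$ satisfying $\langle\mathbf{z},\mathbf{v}\rangle\neq 0$ precisely when the scalar \emph{secular equation}
\begin{equation}\label{eq:secular}
1 - \kappa^2\, g_A(\alpha)\, g_B(\beta) = 0, \qquad g_A(\alpha) := \langle (A-\alpha)^{-1}\mathbf{z},\mathbf{z}\rangle,\quad g_B(\beta) := \langle (B-\beta)^{-1}\mathbf{z},\mathbf{z}\rangle,
\end{equation}
holds (the standard rank-one update / Weinstein--Aronszajn computation); the case $\langle\mathbf{z},\mathbf{v}\rangle = 0$ would force $\beta\in\Gamma_B$, excluded by hypothesis. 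First I would establish \eqref{eq:secular} carefully, record that under $\Gamma_A=\Gamma_B=\emptyset$ all $\alpha_j,\beta_j$ are simple and $g_A,g_B$ are genuine rational functions of degree $n$ with simple poles at the $\alpha_j$ (resp. $\beta_j$) and all residues strictly negative (residue at $\alpha_j$ is $-|\langle\mathbf{z},\boldsymbol\varphi_j\rangle|^2 < 0$). Consequently $g_A$ is strictly increasing on each interval between consecutive poles, runs from $0^-$ to $-\infty$ near the left pole side and from $+\infty$ to $0^+$ on the right, and its $n-1$ real zeros are exactly the $\widehat\alpha_k$ — this is the classical characterization of the eigenvalues of the compression $A_{\bot,\bot}$, and likewise for $g_B$ and $\widehat\beta_k$.

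With the sign data for $g_A,g_B$ in hand, parts (a) and (b) are a sign-chasing argument. On the interval $r_p = (x_{p-1},x_p)$ the sign of $g_A(\alpha)$ is constant: the numbers $x_{2j-1}=\alpha_j$ are its poles and $x_{2k}=\widehat\alpha_k$ its zeros, so $g_A$ alternates sign on consecutive intervals $r_1, r_2, \ldots$; one checks the sign is, say, $(-1)^p$ times a fixed sign (determined by examining the leftmost interval $(-\infty,\alpha_1)$ where $g_A<0$, giving $\operatorname{sign} g_A|_{r_p} = (-1)^p$). Similarly $\operatorname{sign} g_B|_{r_q} = (-1)^q$. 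For \eqref{eq:secular} to have a solution we need $g_A(\alpha) g_B(\beta) = 1/\kappa^2 > 0$, hence $g_A(\alpha)$ and $g_B(\beta)$ must have the same sign, i.e. $(-1)^{p+q} = 1$, which is (a). For (b): along a curve a transition between rectangles happens at a corner $(x_i,y_j)$; since the curve stays in the open $p+q$-even rectangles and is continuous there (by (c)), a crossing at $(x_i,y_j)$ means the curve must pass from an even-$(p+q)$ rectangle to an adjacent even-$(p+q)$ rectangle, and the only corners shared by two such diagonally-adjacent rectangles are those with $i+j$ odd — this is just the checkerboard adjacency, made rigorous by the sign bookkeeping plus the limiting behavior of $g_A$ at its zeros and poles (at a zero $x_{2k}$ of $g_A$, \eqref{eq:secular} forces $g_B(\beta)\to\infty$, i.e. $\beta\to$ a pole $\beta_j$; at a pole $x_{2j-1}$ of $g_A$, \eqref{eq:secular} forces $g_B(\beta)\to 0$, i.e. $\beta\to$ a zero $\widehat\beta_k$ — in both cases the partner coordinate is of opposite parity type).

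For (c): fix $\alpha_i\in\Spec(A)$. As $\alpha\to\alpha_i$ the function $g_A(\alpha)\to\mp\infty$ depending on the side (sign from the negative residue), so \eqref{eq:secular} forces $g_B(\beta)\to 0$; since $g_B$ is a nonconstant rational function with $g_B(\beta)\to 0$ as $\beta\to\pm\infty$ and as $\beta\to\widehat\beta_k$ from the appropriate side, exactly one branch of solutions $\beta(\alpha)$ has $|\beta|\to\infty$, and its sign is pinned by the sign of $g_A$ on the relevant side — giving the stated blow-up $\beta(\alpha)\to\pm\infty$ as $\alpha\to\alpha_i\pm 0$; the remaining $n-1$ branches converge to the $\widehat\beta_k$. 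Continuity elsewhere is immediate from the implicit function theorem applied to \eqref{eq:secular} once we know the relevant derivative is nonzero, which is part (d). For (d): differentiate \eqref{eq:secular} implicitly, $\kappa^2(g_A'(\alpha) g_B(\beta) + g_A(\alpha) g_B'(\beta)\,\beta'(\alpha)) = 0$, solve for $\beta'(\alpha)$, and substitute $g_A(\alpha) = 1/(\kappa^2 g_B(\beta))$ from \eqref{eq:secular} to get
\begin{equation*}
\frac{d\beta}{d\alpha} = -\,\frac{g_A'(\alpha)\, g_B(\beta)^2}{g_B'(\beta)} \cdot \kappa^2 \cdot \frac{1}{1} = -\kappa^2\, \frac{g_A'(\alpha)\,\bigl(g_B(\beta)\bigr)^2}{g_B'(\beta)},
\end{equation*}
wait — more carefully one obtains $\dfrac{d\beta}{d\alpha} = -\dfrac{g_A'(\alpha)\,g_B(\beta)}{g_A(\alpha)\,g_B'(\beta)} = -\kappa^2\dfrac{g_A'(\alpha)\,\bigl(g_B(\beta)\bigr)^2}{g_B'(\beta)}$, and using $g_A'(\alpha) = \langle (A-\alpha)^{-2}\mathbf{z},\mathbf{z}\rangle$ (differentiate the resolvent) and $g_B'(\beta) = \langle (B-\beta)^{-2}\mathbf{z},\mathbf{z}\rangle$ yields \eqref{eq:mon}. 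Since $(A-\alpha)^{-2}$ and $(B-\beta)^{-2}$ are positive definite for real $\alpha\notin\Spec(A)$, $\beta\notin\Spec(B)$, both $g_A'$ and $g_B'$ are strictly positive, the squared factor is nonnegative and in fact strictly positive because $g_B(\beta)=1/(\kappa^2 g_A(\alpha))\neq 0$, so $d\beta/d\alpha<0$ on each interval of continuity.

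The main obstacle I anticipate is not any single computation but assembling the sign/parity bookkeeping of parts (a)–(b) into a clean statement, and — more substantively — handling the interface between the reduced equation \eqref{eq:2.2}, valid only for $\alpha\notin\Spec(A)$, and the full problem \eqref{eq:2.1} at the poles: one must verify that no \emph{extra} real pair-eigenvalues are created with $\alpha\in\Spec(A)$ beyond what the limiting curves predict, i.e. that when $\alpha=\alpha_i$ a nontrivial solution of \eqref{eq:2.1} forces (using $\Gamma_A=\emptyset$, so $\langle\mathbf{z},\boldsymbol\varphi_i\rangle\neq 0$) either $\mathbf{u}\parallel\boldsymbol\varphi_i$ with $C^*\mathbf{u}\neq 0$ — contributing the blow-up branch only in the limiting sense — or a contradiction. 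Making the phrase "exactly one curve blows up" precise (counting branches: $n$ solutions $\beta(\alpha)$ of \eqref{eq:2.2} for each $\alpha$, of which one escapes to $\pm\infty$ and $n-1$ limit to $\widehat\beta_1,\dots,\widehat\beta_{n-1}$ as $\alpha\to\alpha_i$) is the part that needs the most care.
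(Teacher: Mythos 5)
Your plan follows essentially the same route as the paper's proof: your secular equation is the paper's characteristic equation \eqref{eq:4.1} (Theorem \ref{thm:4.1}), the identification of the zeros of $g_A(\alpha)=\langle (A-\alpha)^{-1}\mathbf{z},\mathbf{z}\rangle$ with $\Spec(A_{\bot,\bot})$ is Lemma \ref{lem:3.1}, the sign-change bookkeeping across the poles $\alpha_j$ and zeros $\widehat\alpha_k$ is Lemma \ref{lem:3.4}, and (c), (d) are obtained exactly as in the paper (forcing $g_B(\beta)\to 0$ at a pole of $g_A$, and implicit differentiation plus \eqref{eq:4.1} to get \eqref{eq:mon}); your branch-counting remark for (c) is in fact slightly more explicit than the paper's. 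Two corrections are needed. First, your sign data for $g_A$ is backwards: by \eqref{eq:3.4}, on $(-\infty,\alpha_1)$ every term $|\langle\mathbf{z},\boldsymbol{\varphi}_j\rangle|^2/(\alpha_j-\alpha)$ is positive, so $g_A>0$ there, and near a pole $\alpha_j$ one has $g_A\to+\infty$ from the left and $-\infty$ from the right (which is also what strict monotone increase forces); hence $\operatorname{sign} g_A|_{r_p}=(-1)^{p+1}$, not $(-1)^p$. Since you make the same slip for $g_B$, the parity conclusion $(-1)^{p+q}=1$ — and hence (a), (b) — is unaffected, but the intermediate claims as written are false and should be fixed. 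Second, the ``interface'' issue you flag at $\alpha=\alpha_i\in\Spec(A)$ is precisely the content of the paper's Lemma \ref{lem:intersection}, and its outcome is not the dichotomy you anticipate (blow-up only in a limiting sense, or a contradiction): pairing \eqref{eq:1.3} with $\boldsymbol{\varphi}_i$ and using $\langle\mathbf{z},\boldsymbol{\varphi}_i\rangle\neq0$ gives $\langle\mathbf{v},\mathbf{z}\rangle=0$ and $\mathbf{u}\in\mathrm{Span}\{\boldsymbol{\varphi}_i\}$, and then projecting the second equation by $Q$ and $P$ shows that $(\alpha_i,\beta)$ is a genuine real pair-eigenvalue if and only if $\beta\in\Spec(B_{\bot,\bot})$; that is, the vertical lines $\alpha=\alpha_i$ carry actual spectrum exactly at the corner points $(\alpha_i,\widehat\beta_k)$, which is what makes statement (b) sharp. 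With that lemma supplied (and the sign base case corrected), your outline coincides with the paper's argument.
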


\begin{rem} As $\alpha$ and $\beta$ are in fact interchangeable,  Theorem \ref{thm:SimpleChess} can be equivalently reformulated in terms of curves $(\alpha(\beta),\beta)$ with the only modification being that exactly one curve $\alpha(\beta)$ blows up at each eigenvalue of $B$ in the sense that $\alpha(\beta)  \rightarrow\pm\infty$ as $\beta \rightarrow\beta_{j}\pm0$, $\beta_j\in\Spec(A)$.
\end{rem}

\subsection{Statement of the full Chess Board Theorem}

In this section, we assume that either $ \Gamma_{A} \neq \emptyset $ or $ \Gamma_{B} \neq \emptyset $. Denote additionally, for $X: \mathcal{H} \rightarrow \mathcal{H}$, 
\begin{align*}
\Delta_X & :=  \{ \lambda\in\Spec(X)\mid \lambda \in \Spec(X)\cap \Spec(X_{\bot,\bot})\quad \text{and} \\
&\qquad \dim(\Phi_{X_{\bot,\bot},\lambda}) > \dim(\Phi_{X,\lambda}) \}.
\end{align*}
We will state formally an analogue of  Theorem \ref{thm:SimpleChess} below, but we start with summarising the principle changes: first, we exclude from the dividing mesh the points of  $\widetilde{\Gamma}_{A}\setminus \Delta_A$ and $\widetilde{\Gamma}_{B}\setminus \Delta_B$; and secondly, the real pair-spectrum of $M$ will, in addition to the curves, contain the lines $(\Gamma_{A}\times\mathbb{R})$ and 
($\mathbb{R}\times\Gamma_{B})$.

More precisely, let $x_i$, $i=1,\dots,s$, denote the points of 
\[
\left( (\Spec(A)\cup\Spec(A_{\bot,\bot}) )\setminus \widetilde{\Gamma}_{A}\right)\cup \Delta_A
\] 
enumerated in increasing order without account of multiplicities, and similarly $y_j$, $j=1,\dots,t$, denote the points of the analogue for $B$ enumerated in increasing order without account of multiplicities. Set additionally $x_0=y_0=-\infty$, $x_{s+1}=y_{t+1}=+\infty$, and 
\[
R_{p,q}=(x_{p-1},x_p)\times(y_{q-1},y_q),\quad p=1,\dots,s+1,\quad q=1,\dots,t+1.
\]

\begin{thm}[The Full Chess Board Theorem]\label{thm:FullChess} 
All the real pair-eigenvalues 
$(\alpha,\beta)$ of $M$ lie either on the straight lines $(\Gamma_{A}\times\mathbb{R}) \cup (\mathbb{R}\times\Gamma_{B})$ or on a family of curves $(\alpha,\beta(\alpha))$ with the following properties:
\begin{enumerate}
\item[(a)] each curve may pass only through rectangles $R_{p,q}$ with $p+q$ even;
\item[(b)] each curve may cross from rectangle to rectangle only through the corner points $\left(x_{i},y_{j}\right)$ with $i+j$ odd;
\item[(c)] each curve $\beta(\alpha)$ is continuous in $\alpha$ except at eigenvalues of $A$ not belonging to $\widetilde{\Gamma}_{A}$; at each such eigenvalue of $A$ exactly one curve blows up in the following sense:
$\beta(\alpha)  \rightarrow\pm\infty$ as $\alpha \rightarrow\alpha_{i}\pm0$, $\alpha_i\in\Spec(A)$;
\item[(d)] each curve $\beta(\alpha)$ is monotone decreasing in $\alpha$ on its domain of continuity with \eqref{eq:mon}.
\end{enumerate}
\end{thm}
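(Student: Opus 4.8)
The plan is to reduce the Full Chess Board Theorem to the Simple case by a careful analysis of how the rank-one perturbation $\kappa^2 P(A-\alpha)^{-1}P$ interacts with the eigenspaces that are $\mathbf z$-orthogonal. First I would establish the straight-line component of the pair-spectrum: if $\alpha\in\Gamma_A$, pick $\boldsymbol\varphi\in\Phi_{A,\alpha}$ with $\langle\mathbf z,\boldsymbol\varphi\rangle=0$, so $P\boldsymbol\varphi=0$; then $\bigl(\begin{smallmatrix}\mathbf u\\\mathbf v\end{smallmatrix}\bigr)=\bigl(\begin{smallmatrix}\boldsymbol\varphi\\0\end{smallmatrix}\bigr)$ solves \eqref{eq:2.1} for \emph{every} $\beta$, because the first row reads $(A-\alpha)\boldsymbol\varphi+\kappa P\cdot 0=0$ and the second reads $\kappa P\boldsymbol\varphi+(B-\beta)\cdot 0=0$. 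Symmetrically for $\beta\in\Gamma_B$. This accounts for the lines $(\Gamma_A\times\mathbb R)\cup(\mathbb R\times\Gamma_B)$; what remains is to show all other real pair-eigenvalues lie on the curve family with the stated properties.

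Next I would separate the Hilbert space. Since $P$ and $Q=I-P$ are the only operators coupling the two rows, and since $A,B$ need \emph{not} commute with $P$, the cleanest route is via the reduced scalar equation. For $\alpha\notin\Spec(A)$, \eqref{eq:2.2} shows $\beta$ is an eigenvalue of the rank-one perturbation $B-\kappa^2 P(A-\alpha)^{-1}P$ of $B$. By the standard rank-one (Aronszajn--Krein) formula, its eigenvalues are determined, away from $\widetilde\Gamma_B$, by the secular equation
\begin{equation}\label{eq:secular}
1-\kappa^2\langle (A-\alpha)^{-1}\mathbf z,\mathbf z\rangle\,\langle (B-\beta)^{-1}\mathbf z,\mathbf z\rangle=0,
\end{equation}
while the eigenvalues in $\widetilde\Gamma_B$ persist unperturbed (their eigenvectors being killed by $P$). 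I would then verify that the functions $\alpha\mapsto\langle(A-\alpha)^{-1}\mathbf z,\mathbf z\rangle$ and $\beta\mapsto\langle(B-\beta)^{-1}\mathbf z,\mathbf z\rangle$ are the Herglotz functions $\sum_j |\langle\mathbf z,\boldsymbol\varphi_j\rangle|^2/(\alpha_j-\alpha)$ and $\sum_j|\langle\mathbf z,\boldsymbol\psi_j\rangle|^2/(\beta_j-\beta)$, whose poles are exactly the eigenvalues of $A$, resp.\ $B$, \emph{not} in $\widetilde\Gamma_A$, resp.\ $\widetilde\Gamma_B$, and whose zeros are exactly the eigenvalues of $A_{\bot,\bot}$, resp.\ $B_{\bot,\bot}$, again with the $\widetilde\Gamma$-points removed and the $\Delta$-points reinstated (this last bookkeeping is what forces the precise definition of the mesh $\{x_i\},\{y_j\}$). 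Each of these two scalar functions is strictly monotone between consecutive poles, and that monotonicity, fed through implicit differentiation of \eqref{eq:secular}, gives formula \eqref{eq:mon} and hence (d); the sign pattern of the product of the two Herglotz functions against $1/\kappa^2$ on each rectangle $R_{p,q}$ gives the checkerboard parity statements (a) and the blow-up/continuity statement (c), exactly as in the proof of Theorem~\ref{thm:SimpleChess}; and the corner-crossing statement (b) follows because a curve can only leave a rectangle where one of $\langle(A-\alpha)^{-1}\mathbf z,\mathbf z\rangle$, $\langle(B-\beta)^{-1}\mathbf z,\mathbf z\rangle$ hits $0$ or $\infty$, i.e.\ at a mesh line, and a transversal crossing of two mesh lines is a corner.

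The main obstacle, and the genuinely new content relative to the simple case, is the correct treatment of the exceptional eigenvalues. One must show three things carefully: (i) that a $\lambda\in\widetilde\Gamma_A$ does \emph{not} produce a pole of $\langle(A-\alpha)^{-1}\mathbf z,\mathbf z\rangle$ and so should be deleted from the mesh — unless it is rescued by lying in $\Delta_A$, where a dimension count shows the multiplicity of $\lambda$ as a zero of the Herglotz denominator drops below the multiplicity as an eigenvalue of $A_{\bot,\bot}$, leaving a residual zero that must be kept; (ii) that an eigenvalue $\lambda\in\Gamma_A\setminus\widetilde\Gamma_A$ (necessarily multiple) simultaneously contributes a pole to the scalar function — so exactly one curve blows up there, giving (c) — \emph{and} contributes the vertical line $\{\alpha=\lambda\}\times\mathbb R$ to $\Spec_p(M)$ via the $\mathbf z$-orthogonal eigenvector; one must check these two phenomena are consistent and exhaust the pair-eigenvalues with that $\alpha$. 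The bookkeeping in (i)--(ii) is most transparently done by writing $A=A_{\bot,\bot}\oplus(\text{rank-one correction along }\mathbf z)$ in the orthogonal decomposition $\mathcal H=Z^\bot\oplus Z$ and comparing characteristic polynomials; I would organize it as a lemma identifying the pole set and zero set of $\langle(A-\alpha)^{-1}\mathbf z,\mathbf z\rangle$ with $(\Spec(A)\setminus\widetilde\Gamma_A)$ and $((\Spec(A_{\bot,\bot})\setminus\widetilde\Gamma_A)\cup\Delta_A)$ respectively, after which the rest of the argument runs parallel to Theorem~\ref{thm:SimpleChess}.
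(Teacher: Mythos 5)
Your proposal takes essentially the same route as the paper: the straight lines come from the same explicit eigenvector $\begin{pmatrix}\boldsymbol{\varphi}\\ \mathbf{0}\end{pmatrix}$ (the paper's Lemma \ref{lem:StraightLines}), your Aronszajn--Krein secular equation is exactly the paper's characteristic equation \eqref{eq:4.1} (Theorem \ref{thm:4.1}), your pole/zero and sign-change bookkeeping for $\langle (A-\alpha)^{-1}\mathbf{z},\mathbf{z}\rangle$ including the $\widetilde{\Gamma}$- and $\Delta$-corrections is the content of the paper's Lemmas \ref{lem:3.1} and \ref{lem:3.4}, and (d) is obtained by the same implicit differentiation of the characteristic equation. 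The only step you leave implicit that the paper spells out is its Lemma \ref{lem:intersection}, which shows that for $\alpha\in\Spec(A)\setminus\Gamma_{A}$ the only admissible finite $\beta$ (with $\beta\notin\Gamma_B$) are eigenvalues of $B_{\bot,\bot}$, so the crossings claimed in (b) indeed occur only at the odd-parity corner points.
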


\subsection{Limit cases}

In this section, we show that when $\kappa \rightarrow 0$, the components of the real pair-eigenvalues of \eqref{eq:2.1} approach the eigenvalues of $A$ and $B$, and when $\kappa \rightarrow \infty$, they approach the eigenvalues of $A_{\bot,\bot}$ and $B_{\bot,\bot}$. For brevity, we will work under the restrictions of the simple Chess Board Theorem.

\begin{thm}\label{thm:boards}
Suppose $\Gamma_A=\Gamma_B=\emptyset$. As $\kappa \rightarrow 0$, the real pair-eigenvalue spectrum of $M$ converges to 
$( \Spec(A) \times\mathbb{R} ) \cup (\mathbb{R} \times \Spec(B))$,
and as $\kappa \rightarrow \infty$, the real pair-eigenvalue spectrum of $M$ converges to
$( \Spec(A_{\bot,\bot}) \times\mathbb{R} ) \cup (\mathbb{R} \times \Spec(B_{\bot,\bot}))$.
\end{thm}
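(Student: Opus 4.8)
The plan is to collapse the whole real pair-spectrum onto the zero locus of a single scalar polynomial whose coefficients depend affinely on $\kappa^{2}$, and then to invoke continuity of the roots of a polynomial under perturbation of its coefficients.

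\medskip
\emph{Step 1: the characteristic equation.} With $C=\kappa P$ and $P=\langle\,\cdot\,,\mathbf z\rangle\mathbf z$, eliminating $\mathbf u$ and $\mathbf v$ from \eqref{eq:1.3}--\eqref{eq:1.4} (exactly as in the derivation of \eqref{eq:2.2}) shows that, for $\alpha\notin\Spec(A)$ and $\beta\notin\Spec(B)$, the pair $(\alpha,\beta)$ is a pair-eigenvalue of $M$ if and only if
\[
\kappa^{2}f_{A}(\alpha)f_{B}(\beta)=1,\qquad f_{A}(\alpha):=\langle(A-\alpha)^{-1}\mathbf z,\mathbf z\rangle,\quad f_{B}(\beta):=\langle(B-\beta)^{-1}\mathbf z,\mathbf z\rangle.
\]
In the eigenbases one has $f_{A}(\alpha)=q_{A}(\alpha)/p_{A}(\alpha)$ with $p_{A}(\alpha)=\prod_{j=1}^{n}(\alpha_{j}-\alpha)$ and $q_{A}(\alpha)=\sum_{j=1}^{n}|\langle\mathbf z,\boldsymbol\varphi_{j}\rangle|^{2}\prod_{i\neq j}(\alpha_{i}-\alpha)$, a polynomial of degree $n-1$ with leading coefficient $(-1)^{n-1}$ since $\|\mathbf z\|=1$; analogously $f_{B}=q_{B}/p_{B}$. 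Hence the characteristic equation becomes the polynomial equation $P_{\kappa}(\alpha,\beta)=0$, where $P_{\kappa}:=\kappa^{2}q_{A}q_{B}-p_{A}p_{B}$ is a polynomial of total degree $2n$.

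\medskip
\emph{Step 2: identifying $q_{A}$, and the two limiting polynomials.} Under $\Gamma_{A}=\emptyset$ the eigenvalues of $A$ are simple and $\langle\mathbf z,\boldsymbol\varphi_{j}\rangle\neq0$ for all $j$; moreover a Schur-complement computation (the same one that underlies Theorem \ref{thm:SimpleChess} and makes the interlacing \eqref{eq:2.3} strict) shows that the $n-1$ zeros of $f_{A}$ are exactly the eigenvalues $\widehat\alpha_{1}<\dots<\widehat\alpha_{n-1}$ of $A_{\bot,\bot}$, so $q_{A}(\alpha)=\prod_{k=1}^{n-1}(\widehat\alpha_{k}-\alpha)$ and $q_{A},p_{A}$ share no root; likewise for $B$. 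A short direct inspection of \eqref{eq:1.3}--\eqref{eq:1.4} at the excluded points ($\alpha\in\Spec(A)$ or $\beta\in\Spec(B)$) shows that the only real pair-eigenvalues there are the finitely many corner points $(\alpha_{j},\widehat\beta_{l})$, $(\widehat\alpha_{k},\beta_{i})$, all of which also satisfy $P_{\kappa}=0$. Therefore $\Spec_{p}(M)\cap\mathbb R^{2}$ is precisely the real zero locus of $\{P_{\kappa}=0\}$. Since $q_{A},q_{B},p_{A},p_{B}$ are independent of $\kappa$, as $\kappa\to0$ the coefficients of $P_{\kappa}$ converge to those of $-p_{A}p_{B}$, with real zero set $\mathcal L_{0}:=(\Spec(A)\times\mathbb R)\cup(\mathbb R\times\Spec(B))$, while $\kappa^{-2}P_{\kappa}\to q_{A}q_{B}$ as $\kappa\to\infty$, with real zero set $\mathcal L_{\infty}:=(\Spec(A_{\bot,\bot})\times\mathbb R)\cup(\mathbb R\times\Spec(B_{\bot,\bot}))$ --- exactly the two asserted limits.

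\medskip
\emph{Step 3: convergence.} I understand convergence in the Hausdorff sense on each compact box $K\subset\mathbb R^{2}$; write $N_{\varepsilon}(S)$ for the $\varepsilon$-neighbourhood of a set $S$. For the inclusion $\Spec_{p}(M)\cap K\subset N_{\varepsilon}(\mathcal L_{0})$ at small $\kappa$: on $K\setminus N_{\varepsilon}(\mathcal L_{0})$ both $|f_{A}|$ and $|f_{B}|$ are bounded, being continuous and bounded away from the poles $\alpha_{j}$, $\beta_{i}$, so $\kappa^{2}|f_{A}f_{B}|<1$ once $\kappa$ is small, and $P_{\kappa}$ has no zero there. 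For $\mathcal L_{\infty}$ at large $\kappa$ one uses instead that on $K\setminus N_{\varepsilon}(\mathcal L_{\infty})$ both $|f_{A}|$ and $|f_{B}|$ are bounded \emph{below} by a positive constant --- their only zeros $\widehat\alpha_{k}$, $\widehat\beta_{l}$ have been excised and near the poles they blow up --- so $\kappa^{2}|f_{A}f_{B}|>1$ once $\kappa$ is large. For the reverse inclusion, given $(\alpha_{*},\beta_{*})\in\mathcal L_{0}\cap K$, say $\alpha_{*}=\alpha_{j}$, choose $\beta_{0}$ with $|\beta_{0}-\beta_{*}|<\varepsilon$ and $p_{B}(\beta_{0})q_{B}(\beta_{0})\neq0$; the one-variable polynomial $\alpha\mapsto P_{\kappa}(\alpha,\beta_{0})$ has degree $n$ and converges coefficient-wise to $\alpha\mapsto-p_{B}(\beta_{0})p_{A}(\alpha)$, which has $\alpha_{j}$ as a simple real root, so by continuity of roots $P_{\kappa}(\,\cdot\,,\beta_{0})$ has a real root $\alpha_{\kappa}\to\alpha_{j}$ with $\alpha_{\kappa}\notin\Spec(A)$; then $(\alpha_{\kappa},\beta_{0})\in\Spec_{p}(M)$ lies within $2\varepsilon$ of $(\alpha_{*},\beta_{*})$. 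The case $\kappa\to\infty$ is identical after dividing by $\kappa^{2}$, with $q$'s in place of $p$'s and the simple root $\widehat\alpha_{k}$ of $q_{A}$ in place of $\alpha_{j}$ (one root of $\kappa^{-2}P_{\kappa}(\,\cdot\,,\beta_{0})$ escapes to infinity, the other $n-1$ converge to the $\widehat\alpha_{k}$). Combining the two inclusions gives $\Spec_{p}(M)\cap K\to\mathcal L_{0}\cap K$ as $\kappa\to0$ and $\Spec_{p}(M)\cap K\to\mathcal L_{\infty}\cap K$ as $\kappa\to\infty$.

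\medskip
\emph{Main obstacle.} The only genuinely substantive step is Step 2: identifying the zeros of the Weyl-type function $f_{A}$ with $\Spec(A_{\bot,\bot})$, equivalently the factorisation $q_{A}(\alpha)=\prod_{k}(\widehat\alpha_{k}-\alpha)$ together with strict interlacing. This is precisely what makes the $\kappa\to\infty$ limit come out as the compressed spectrum rather than something else, and it is a Schur-complement identity already implicit in the proof of Theorem \ref{thm:SimpleChess}. A secondary nuisance is that branches of the pair-spectrum run off to infinity (one curve blows up near each $\alpha=\alpha_{j}$ as $\kappa\to0$; branches near the poles of $f_{B}$ escape along $\alpha\to\pm\infty$ as $\kappa\to\infty$), but any such branch leaves every fixed box $K$ and so does not affect the Hausdorff estimate on $K$.
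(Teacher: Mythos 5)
Your proof is correct and follows essentially the same route as the paper: both rest on the characteristic equation \eqref{eq:4.1} together with the identification of the poles of $\langle(A-\alpha)^{-1}\mathbf z,\mathbf z\rangle$ with $\Spec(A)$ and of its zeros with $\Spec(A_{\bot,\bot})$ (Lemmas \ref{lem:3.4} and \ref{lem:3.1}). Your reformulation via the polynomial $P_{\kappa}=\kappa^{2}q_Aq_B-p_Ap_B$ and the root-continuity argument simply spell out, in more detail, what the paper's proof leaves to ``standard perturbation techniques''.
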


\section{Auxiliary results}
The statements in this section are for a single matrix, and mostly very elementary. We shall use them later in the proof of the Chess Board Theorem.
We shall frequently use the Fourier representation of the resolvent,
\begin{equation}\label{eq:resolvent}
(A-\alpha)^{-1}\mathbf{f}=\sum_{j}\frac{\langle \mathbf{f},\boldsymbol{\varphi}_{j}\rangle }{\alpha_{j}-\alpha}\boldsymbol{\varphi}_{j},\qquad\alpha\not\in\Spec(A).
\end{equation}
We also set 
\begin{equation}\label{eq:3.4}
R(\alpha):=\langle (A-\alpha)^{-1}\mathbf{z},\mathbf{z}\rangle =\sum_{j}\frac{|\langle \mathbf{z},\boldsymbol{\varphi}_{j}\rangle |^{2}}{\alpha_{j}-\alpha}.
\end{equation}

\begin{lem}\label{lem:3.1} 
Let $\alpha\notin\Spec(A)$. Then $R(\alpha) =0$ if and only if $\alpha\in\Spec(A_{\bot,\bot})$ and $(A-\alpha)^{-1}\mathbf{z}=c\widehat{\boldsymbol{\varphi}}$,
where $\widehat{\boldsymbol{\varphi}}$ is an eigenfunction of $A_{\bot,\bot}$ corresponding to $\alpha$ and $c\ne 0$.
\end{lem}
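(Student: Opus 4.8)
The plan is to work directly with the Fourier representation \eqref{eq:3.4} of $R(\alpha)$ and characterise when the sum vanishes. Write $\mathbf{w}:=(A-\alpha)^{-1}\mathbf{z}=\sum_j \frac{\langle \mathbf{z},\boldsymbol{\varphi}_j\rangle}{\alpha_j-\alpha}\boldsymbol{\varphi}_j$, which is well-defined since $\alpha\notin\Spec(A)$. Then $R(\alpha)=\langle \mathbf{w},\mathbf{z}\rangle$, and since $(A-\alpha)\mathbf{w}=\mathbf{z}$, we have $\langle \mathbf{w},\mathbf{z}\rangle=\langle \mathbf{w},(A-\alpha)\mathbf{w}\rangle$. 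The key identity to exploit is therefore $R(\alpha)=\langle (A-\alpha)\mathbf{w},\mathbf{w}\rangle$; also note $\langle \mathbf{w},\mathbf{z}\rangle=0$ says precisely that $\mathbf{w}\in Z^\bot$, i.e. $P\mathbf{w}=0$ and $Q\mathbf{w}=\mathbf{w}$.

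For the ``if'' direction: suppose $\alpha\in\Spec(A_{\bot,\bot})$ with eigenvector $\widehat{\boldsymbol{\varphi}}\in Z^\bot$, so $A_{\bot,\bot}\widehat{\boldsymbol{\varphi}}=Q A\widehat{\boldsymbol{\varphi}}=\alpha\widehat{\boldsymbol{\varphi}}$, meaning $(A-\alpha)\widehat{\boldsymbol{\varphi}}\in Z$, say $(A-\alpha)\widehat{\boldsymbol{\varphi}}=\mu\mathbf{z}$ for some scalar $\mu$. Since $\alpha\notin\Spec(A)$, $A-\alpha$ is invertible and $\mu\neq 0$ (otherwise $\widehat{\boldsymbol{\varphi}}=0$), so $\widehat{\boldsymbol{\varphi}}=\mu (A-\alpha)^{-1}\mathbf{z}=\mu\mathbf{w}$, giving $(A-\alpha)^{-1}\mathbf{z}=c\widehat{\boldsymbol{\varphi}}$ with $c=1/\mu\neq 0$; and then $R(\alpha)=\langle \mathbf{w},\mathbf{z}\rangle=c\langle \widehat{\boldsymbol{\varphi}},\mathbf{z}\rangle=0$ because $\widehat{\boldsymbol{\varphi}}\in Z^\bot$.

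For the ``only if'' direction, assume $R(\alpha)=0$, i.e. $\mathbf{w}=(A-\alpha)^{-1}\mathbf{z}\in Z^\bot$. We want to show $\mathbf{w}$ is an eigenvector of $A_{\bot,\bot}$ for $\alpha$. Compute $A_{\bot,\bot}\mathbf{w}=QA\mathbf{w}$. Now $A\mathbf{w}=(A-\alpha)\mathbf{w}+\alpha\mathbf{w}=\mathbf{z}+\alpha\mathbf{w}$, so $QA\mathbf{w}=Q\mathbf{z}+\alpha Q\mathbf{w}=0+\alpha\mathbf{w}=\alpha\mathbf{w}$, using $\mathbf{z}\in Z$ hence $Q\mathbf{z}=0$, and $\mathbf{w}\in Z^\bot$ hence $Q\mathbf{w}=\mathbf{w}$. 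Also $\mathbf{w}\neq 0$ since $\mathbf{z}\neq 0$ and $A-\alpha$ is invertible. Thus $\alpha\in\Spec(A_{\bot,\bot})$ and $\mathbf{w}$ itself is a corresponding eigenvector, so $(A-\alpha)^{-1}\mathbf{z}=c\widehat{\boldsymbol{\varphi}}$ with $c\neq 0$ for any choice of eigenvector $\widehat{\boldsymbol{\varphi}}$ proportional to $\mathbf{w}$ (and, if $\alpha$ is a multiple eigenvalue of $A_{\bot,\bot}$, one simply takes $\widehat{\boldsymbol{\varphi}}:=\mathbf{w}$).

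I do not anticipate a serious obstacle here; the only point requiring a little care is the bookkeeping around the decomposition $\mathcal H=Z\oplus Z^\bot$ and the fact that $A_{\bot,\bot}$ is the compression $QA|_{Z^\bot}$, so that $(A-\alpha)\widehat{\boldsymbol{\varphi}}\in Z$ is equivalent to $\widehat{\boldsymbol{\varphi}}$ being an eigenvector of $A_{\bot,\bot}$; one should also note that the scalar $\mu$ (equivalently $c$) is automatically nonzero precisely because $\alpha\notin\Spec(A)$, which is the hypothesis that makes the resolvent available in the first place.
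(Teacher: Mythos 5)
Your proof is correct and follows essentially the same route as the paper: both set $\mathbf{w}=(A-\alpha)^{-1}\mathbf{z}$, observe that $R(\alpha)=0$ means $\mathbf{w}\in Z^\bot$, and project the identity $(A-\alpha)\mathbf{w}=\mathbf{z}$ onto $Z$ and $Z^\bot$ to identify $\mathbf{w}$ as an eigenvector of $A_{\bot,\bot}$. Your explicit treatment of the ``if'' direction (showing that $\alpha\in\Spec(A_{\bot,\bot})$ together with $\alpha\notin\Spec(A)$ already forces $(A-\alpha)^{-1}\mathbf{z}=c\widehat{\boldsymbol{\varphi}}$) is slightly more detailed than the paper's, which leaves that direction implicit in its chain of equivalences, but the substance is the same.
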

\begin{proof}

Set $\boldsymbol{\zeta}=(A-\alpha)^{-1}\mathbf{z}$. Then 
\begin{equation}\label{eq:3.1}
(A-\alpha)\boldsymbol{\zeta}=\mathbf{z} \Leftrightarrow  
\begin{pmatrix}
A_{\bot,\bot}-\alpha & A_{\Vert,\bot}\\
A_{\bot,\Vert}           & A_{\Vert,\Vert}-\alpha
\end{pmatrix}
\begin{pmatrix}
\mathbf{q}\\
\mathbf{p}
\end{pmatrix}=
\begin{pmatrix}
\mathbf{0}\\
\mathbf{z}
\end{pmatrix},
\end{equation}
where $\mathbf{q}=Q\boldsymbol{\zeta}$ and $\mathbf{p}=P\boldsymbol{\zeta}$.
Note that $\langle \boldsymbol{\zeta},\mathbf{z}\rangle =0$
iff $\mathbf{p}=P\boldsymbol{\zeta}=0$. Substituting this into \eqref{eq:3.1}
gives us 
\begin{equation}
\begin{cases}
(A_{\bot,\bot}-\alpha)\mathbf{q} & =\mathbf{0},\\
A_{\bot,\Vert}\mathbf{q} & =\mathbf{z}.
\end{cases}\label{eq:3.2}
\end{equation}
By the second equation, $\mathbf{q}$ is non-zero, and then by the first equation $\alpha\in\Spec(A_{\bot,\bot})$
and $\mathbf{q}=c\widehat{\boldsymbol{\varphi}}$, with $c\ne 0$.
Also, we have 
$
\mathbf{q}=Q\boldsymbol{\zeta}=(I-P) \boldsymbol{\zeta}= \boldsymbol{\zeta}
$,
and so $\boldsymbol{\zeta}=(A-\alpha)^{-1}\mathbf{z}=c\widehat{\boldsymbol{\varphi}}$.
\end{proof}

\begin{lem} \label{lem:CommonSpec}
$\Gamma_{A} = \emptyset$ if and only if $\Spec(A)\bigcap \Spec(A_{\bot,\bot}) = \emptyset$.
\end{lem}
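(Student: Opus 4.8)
The plan is to prove the equivalent statement that $\Gamma_{A} \neq \emptyset$ if and only if $\Spec(A)\cap\Spec(A_{\bot,\bot}) \neq \emptyset$, treating the two implications separately. Both directions come down to the simple observation that a vector $\boldsymbol{\varphi}\in Z^\bot$ satisfies $A_{\bot,\bot}\boldsymbol{\varphi}=QA\boldsymbol{\varphi}$, together with the fact that $A$ is self-adjoint with real spectrum.

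For the direction $\Gamma_{A}\neq\emptyset \Rightarrow \Spec(A)\cap\Spec(A_{\bot,\bot})\neq\emptyset$, I would take $\lambda\in\Gamma_{A}$ witnessed by a nonzero $\boldsymbol{\varphi}\in\Phi_{A,\lambda}$ with $\langle\mathbf{z},\boldsymbol{\varphi}\rangle=0$, so $\boldsymbol{\varphi}\in Z^\bot$. Since $A\boldsymbol{\varphi}=\lambda\boldsymbol{\varphi}$ and $\lambda\in\mathbb{R}$, the vector $A\boldsymbol{\varphi}$ is again orthogonal to $\mathbf{z}$, hence $Q A\boldsymbol{\varphi}=A\boldsymbol{\varphi}=\lambda\boldsymbol{\varphi}$. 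Thus $\boldsymbol{\varphi}$ is a nonzero eigenvector of $A_{\bot,\bot}=\left. QA\right|_{Z^\bot}$ with eigenvalue $\lambda$, so $\lambda\in\Spec(A)\cap\Spec(A_{\bot,\bot})$.

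For the reverse direction, suppose $\lambda\in\Spec(A)\cap\Spec(A_{\bot,\bot})$ and pick $\widehat{\boldsymbol{\varphi}}\in Z^\bot\setminus\{0\}$ with $QA\widehat{\boldsymbol{\varphi}}=\lambda\widehat{\boldsymbol{\varphi}}$. Then $(A-\lambda)\widehat{\boldsymbol{\varphi}}=(I-Q)A\widehat{\boldsymbol{\varphi}}=PA\widehat{\boldsymbol{\varphi}}\in Z$, so $(A-\lambda)\widehat{\boldsymbol{\varphi}}=c\mathbf{z}$ for some scalar $c$. I would then split into two cases. If $c=0$, then $\widehat{\boldsymbol{\varphi}}$ is itself a nonzero eigenvector of $A$ lying in $Z^\bot$, so $\lambda\in\Gamma_{A}$ directly. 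If $c\neq0$, then $\mathbf{z}\in\operatorname{Ran}(A-\lambda)$, and since $A$ is self-adjoint, $\operatorname{Ran}(A-\lambda)=(\ker(A-\lambda))^\bot=\Phi_{A,\lambda}^\bot$; as $\lambda\in\Spec(A)$ the eigenspace $\Phi_{A,\lambda}$ is nonzero, so any nonzero $\boldsymbol{\varphi}\in\Phi_{A,\lambda}$ satisfies $\langle\mathbf{z},\boldsymbol{\varphi}\rangle=0$, again giving $\lambda\in\Gamma_{A}$.

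I do not anticipate a real obstacle here; the only point that needs a moment's care is the case distinction $c=0$ versus $c\neq0$ in the reverse direction, where self-adjointness of $A$ (the orthogonality of range and kernel) is exactly what is needed. One might be tempted to quote Lemma \ref{lem:3.1}, but that lemma presupposes $\alpha\notin\Spec(A)$, precisely the situation excluded here, so the self-contained argument above is the cleaner route.
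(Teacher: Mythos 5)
Your proof is correct. The forward direction coincides with the paper's: an eigenvector of $A$ orthogonal to $\mathbf{z}$ lies in $Z^\bot$ and is automatically an eigenvector of $A_{\bot,\bot}$ for the same eigenvalue. For the converse your route differs from the paper's in packaging. The paper takes \emph{both} an eigenvector $\boldsymbol{\varphi}$ of $A$ and an eigenvector $\widehat{\boldsymbol{\varphi}}$ of $A_{\bot,\bot}$ for the common eigenvalue, evaluates $\langle A\boldsymbol{\varphi},\widehat{\boldsymbol{\varphi}}\rangle$ in two ways to get $\langle P\boldsymbol{\varphi},\,A_{\bot,\Vert}\widehat{\boldsymbol{\varphi}}\rangle=0$, and then uses (implicitly) that both factors lie in the one-dimensional space $Z$ to conclude that one of them vanishes, each case producing an $A$-eigenvector orthogonal to $\mathbf{z}$. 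You work with $\widehat{\boldsymbol{\varphi}}$ alone: $(A-\lambda)\widehat{\boldsymbol{\varphi}}=PA\widehat{\boldsymbol{\varphi}}=c\mathbf{z}$, and either $c=0$, so $\widehat{\boldsymbol{\varphi}}$ is itself an $A$-eigenvector in $Z^\bot$, or $c\neq0$, so $\mathbf{z}\in\operatorname{Ran}(A-\lambda)=\Phi_{A,\lambda}^\bot$ by self-adjointness (and finite dimension, which the paper assumes), whence the whole eigenspace is orthogonal to $\mathbf{z}$. The two arguments are close to equivalent --- the paper's identity $\langle P\boldsymbol{\varphi},A_{\bot,\Vert}\widehat{\boldsymbol{\varphi}}\rangle=0$ is precisely your orthogonality $\langle\boldsymbol{\varphi},(A-\lambda)\widehat{\boldsymbol{\varphi}}\rangle=0$ tested against a single kernel vector --- but your version avoids choosing an $A$-eigenvector at the outset, makes the dichotomy explicit rather than implicit, and in the case $c\neq0$ yields the slightly stronger conclusion $\lambda\in\widetilde{\Gamma}_{A}$. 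Your remark that Lemma \ref{lem:3.1} is not applicable here is also correct, since that lemma assumes $\alpha\notin\Spec(A)$.
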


\begin{proof}
If there exits an $\alpha \in\ \Gamma_{A}$, then there is an eigenfunction $ \boldsymbol{\varphi} \in \Phi_{A,\alpha} $ such that $\langle \mathbf{z},\boldsymbol{\varphi}\rangle =0$, and therefore $P\boldsymbol{\varphi}=0$ and so $Q\boldsymbol{\varphi}=\boldsymbol{\varphi}$. Thus
\[
A_{\bot,\bot}\boldsymbol{\varphi}=QA  \boldsymbol{\varphi} =\alpha Q \boldsymbol{\varphi}= \alpha\boldsymbol{\varphi},
\]
and so $\alpha\in \Spec(A)\bigcap \Spec(A_{\bot,\bot})$.

On the other hand, let  $\alpha\in \Spec(A)\bigcap \Spec(A_{\bot,\bot})$. Then 
\begin{align}
A\boldsymbol{\varphi}=\alpha \boldsymbol{\varphi} \quad \Rightarrow \quad 
\langle A\boldsymbol{\varphi}, \widehat{\boldsymbol{\varphi}} \rangle
=\alpha \langle \boldsymbol{\varphi}, \widehat{\boldsymbol{\varphi}} \rangle 
=\alpha \langle Q \boldsymbol{\varphi}, \widehat{\boldsymbol{\varphi}} \rangle.  \label{eq:3.2-2}
\end{align}
Also, since $  \widehat{\boldsymbol{\varphi}} \bot \boldsymbol{z} $, 
\begin{align}
A  \widehat{\boldsymbol{\varphi}} = A ( \widehat{\boldsymbol{\varphi}} + 0 \boldsymbol{z})
=A_{\bot,\bot}  \widehat{\boldsymbol{\varphi}} + A_{\bot,\parallel} \widehat{\boldsymbol{\varphi}}, \label{eq:3.2-3}
\end{align}
therefore 
\begin{align*}
\langle A\boldsymbol{\varphi}, \widehat{\boldsymbol{\varphi}} \rangle
 = \langle \boldsymbol{\varphi}, A \widehat{\boldsymbol{\varphi}} \rangle
& = \langle \boldsymbol{\varphi}, A_{\bot,\bot}  \widehat{\boldsymbol{\varphi}} \rangle
+\langle \boldsymbol{\varphi},  A_{\bot,\parallel} \widehat{\boldsymbol{\varphi}}  \rangle \\
&= \langle Q \boldsymbol{\varphi}, A_{\bot,\bot}  \widehat{\boldsymbol{\varphi}} \rangle
+\langle P \boldsymbol{\varphi},  A_{\bot,\parallel} \widehat{\boldsymbol{\varphi}}  \rangle \\
& = \alpha  \langle Q \boldsymbol{\varphi},   \widehat{\boldsymbol{\varphi}} \rangle
+\langle P \boldsymbol{\varphi},  A_{\bot,\parallel} \widehat{\boldsymbol{\varphi}}  \rangle 
\end{align*}
which implies by \eqref{eq:3.2-2} that $\langle P \boldsymbol{\varphi},  A_{\bot,\parallel} \widehat{\boldsymbol{\varphi}}  \rangle =0 $.
Now, if $  P \boldsymbol{\varphi} =0$, then $  \boldsymbol{\varphi} \bot \boldsymbol{z}$ so that $ \alpha \in \Gamma_{A} $.
If $A_{\bot,\parallel} \widehat{\boldsymbol{\varphi}} = 0$, then we have from \eqref{eq:3.2-3}  that 
$A \widehat{\boldsymbol{\varphi}} = \alpha \widehat{\boldsymbol{\varphi}}$, and therefore $ \widehat{\boldsymbol{\varphi}} $  is an eigenfunction of $A$ such that 
$ \boldsymbol{z} \bot \widehat{\boldsymbol{\varphi}}$, so that $ \alpha \in \Gamma_{A} $.
\end{proof}

%

\begin{lem}
\label{lem:3.4} If $\alpha \in \Spec(A) \setminus \widetilde{\Gamma}_{A}$, then $R(t)$ has a singularity at $t=\alpha$. The function $R(t)$
changes sign when $ t $ passes through an $\alpha_{j}$, $j=1,\ldots,n$, or
an $\widehat{\alpha}_{k}$,  $k=1,\ldots,n-1$. 

If $\alpha\in\widetilde{\Gamma}_{A}$, then $(A-\alpha)^{-1}\mathbf{z}$
exists, and $R(t)$ is continuous at $t=\alpha$. It changes sign at this $\alpha$ if and only if additionally $\alpha\in \Delta_A$.
\end{lem}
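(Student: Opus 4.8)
The plan is to work throughout from the explicit Fourier series representation
\eqref{eq:3.4}, namely $R(t)=\sum_j \frac{|\langle\mathbf z,\boldsymbol\varphi_j\rangle|^2}{\alpha_j-t}$, together with the characterisation of zeros of $R$ furnished by Lemma \ref{lem:3.1}. First I would dispose of the behaviour at points of $\Spec(A)$. Fix an eigenvalue $\alpha=\alpha_{j_0}$ of $A$ and let $\Phi_{A,\alpha}$ be its eigenspace. Grouping the terms of \eqref{eq:3.4} according to distinct eigenvalues, the contribution of $\alpha$ to $R(t)$ is $\bigl(\sum_{j:\alpha_j=\alpha}|\langle\mathbf z,\boldsymbol\varphi_j\rangle|^2\bigr)/(\alpha-t)$, and the numerator here is $\|P_{\Phi_{A,\alpha}}\mathbf z\|^2$, the squared norm of the orthogonal projection of $\mathbf z$ onto the eigenspace. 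By the very definition of $\widetilde\Gamma_A$, this numerator vanishes precisely when $\alpha\in\widetilde\Gamma_A$. Hence if $\alpha\in\Spec(A)\setminus\widetilde\Gamma_A$ the simple pole survives and $R$ has a genuine singularity at $t=\alpha$; if $\alpha\in\widetilde\Gamma_A$ the pole cancels, every term with $\alpha_j=\alpha$ drops out of the sum, and what remains is a finite sum of the form $\sum_{\alpha_j\neq\alpha}|\langle\mathbf z,\boldsymbol\varphi_j\rangle|^2/(\alpha_j-t)$, which is manifestly continuous (indeed real-analytic) in a neighbourhood of $t=\alpha$; moreover $(A-\alpha)^{-1}\mathbf z$ is then well defined because $\mathbf z$ lies in the range of $A-\alpha$, namely in the orthogonal complement of $\Phi_{A,\alpha}$.

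Next I would treat the sign-change assertions. Between two consecutive poles of $R$ — equivalently, between two consecutive points of $\Spec(A)\setminus\widetilde\Gamma_A$ — the function $R$ is smooth with $R'(t)=\sum_j |\langle\mathbf z,\boldsymbol\varphi_j\rangle|^2/(\alpha_j-t)^2>0$ wherever the sum is not empty, so $R$ is strictly increasing on each such interval and runs from $-\infty$ at the left pole to $+\infty$ at the right pole; thus $R$ changes sign (from $-$ to $+$) exactly once between consecutive surviving poles. The content of the lemma is to identify these sign changes with the $\alpha_j$ and $\widehat\alpha_k$. That $R$ changes sign at each $\alpha_j\notin\widetilde\Gamma_A$ is the above blow-up statement. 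For the zeros of $R$ — which is where the sign change at $\widehat\alpha_k$ must occur — I invoke Lemma \ref{lem:3.1}: $R(t)=0$ for $t\notin\Spec(A)$ iff $t\in\Spec(A_{\bot,\bot})$ with $(A-t)^{-1}\mathbf z$ a nonzero eigenvector of $A_{\bot,\bot}$. Combining this with the strict monotonicity of $R$ on its intervals of continuity pins down exactly one $\widehat\alpha_k$ as a simple zero of $R$ in each bounded interval between consecutive surviving $\alpha_j$'s (matching the interlacing \eqref{eq:2.3}), and at a simple zero of a smooth real function the sign changes; the signs at the two unbounded intervals are handled by $R(t)\to 0^{\mp}$ as $t\to\mp\infty$.

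Finally, the case $\alpha\in\widetilde\Gamma_A$, where $R$ is continuous at $t=\alpha$: here $R(\alpha)=\langle(A-\alpha)^{-1}\mathbf z,\mathbf z\rangle$ is a finite number, and I must decide when it is zero with a sign change. By Lemma \ref{lem:3.1} applied at $t=\alpha$ (which is legitimate since $(A-\alpha)^{-1}\mathbf z$ exists in the sense above — one checks the argument of that lemma only used invertibility of $A-\alpha$ on the orthogonal complement of $\Phi_{A,\alpha}$, i.e.\ that $\mathbf z\perp\Phi_{A,\alpha}$), $R(\alpha)=0$ iff $\alpha\in\Spec(A_{\bot,\bot})$ and $(A-\alpha)^{-1}\mathbf z$ is an eigenvector of $A_{\bot,\bot}$ at $\alpha$; this, after accounting for the $\dim\Phi_{A,\alpha}$ eigenvectors of $A_{\bot,\bot}$ at $\alpha$ that are already eigenvectors of $A$ (these come from $\Phi_{A,\alpha}\subseteq Z^\bot$), is exactly the condition $\dim\Phi_{A_{\bot,\bot},\alpha}>\dim\Phi_{A,\alpha}$, i.e.\ $\alpha\in\Delta_A$. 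When $R(\alpha)=0$, a computation of $R'(\alpha)=\sum_{\alpha_j\neq\alpha}|\langle\mathbf z,\boldsymbol\varphi_j\rangle|^2/(\alpha_j-\alpha)^2$ shows it is strictly positive (the sum is nonempty since $\mathbf z\neq 0$ cannot be supported entirely on $\Phi_{A,\alpha}$ when $\alpha\in\widetilde\Gamma_A$, unless $R\equiv 0$ which is excluded by $\|\mathbf z\|=1$), so $\alpha$ is a simple zero and $R$ changes sign there; conversely if $\alpha\notin\Delta_A$ then $R(\alpha)\neq 0$ and no sign change occurs at that isolated point.

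The step I expect to be the main obstacle is the bookkeeping in the $\widetilde\Gamma_A$ case: correctly relating the dimension inequality defining $\Delta_A$ to the existence of an extra eigenvector of $A_{\bot,\bot}$ of the special form $(A-\alpha)^{-1}\mathbf z$, i.e.\ one not already accounted for by the eigenvectors of $A$ sitting inside $Z^\bot$. Getting the multiplicity count right — and verifying that Lemma \ref{lem:3.1} genuinely applies at a point of $\widetilde\Gamma_A$ despite $\alpha\in\Spec(A)$ — is the delicate part; everything else is the elementary calculus of the rational function $R$.
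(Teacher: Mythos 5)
Your argument is correct and essentially the paper's own: the pole/continuity dichotomy read off from the Fourier expansion \eqref{eq:3.4}, the zeros located via Lemma \ref{lem:3.1} together with monotonicity between poles, and the $\widetilde{\Gamma}_A$ case handled by passing to $\Phi_{A,\alpha}^\bot$ --- which is exactly the paper's ``consider $\left.A\right|_{\Phi_{A,\alpha}^\bot}$ and repeat the argument'' --- with your explicit $R'>0$ and the $\Delta_A$ multiplicity bookkeeping merely filling in details the paper leaves implicit. The only blemish is the sign slip at infinity: $R(t)\to 0^{\pm}$ as $t\to\mp\infty$ (so $R>0$ on $(-\infty,\alpha_1)$), not $0^{\mp}$.
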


\begin{proof}
If $\alpha_j \in \Spec(A) \setminus \widetilde{\Gamma}_{A}$, then there exits at least one $\boldsymbol{\varphi}_j \in \Phi_{A,\alpha}$ such that  $\langle \mathbf{z},\mathrm{\boldsymbol{\varphi}}_j \rangle \neq0$, and it can be seen from \eqref{eq:3.4} that  $R(t)$
goes to $\pm\infty$ as $\alpha  \rightarrow\alpha_{j}\mp0$. 
Furthermore, since $R(t)$ has zeros at $\alpha=\widehat{\alpha}_{k}$ by Lemma \ref{lem:3.1}, and also is a continuous function except at the poles $\alpha=\alpha_{j}$,
it changes sign every time $t$ passes through $\widehat{\alpha}_{j}$
as well.

The second statement follows immediately from  \eqref{eq:3.4} and the fact that $\mathbf{z}\perp \Phi_{A,\alpha}$, and the last statement can be shown  by 
considering $\left.A\right|_{\Phi_{A,\alpha}^\bot}$ and repeating the above argument.
\end{proof}

\section{Proofs of the  main results}

We proceed to the proof of Theorem \ref{thm:FullChess}; Theorem \ref{thm:SimpleChess} follows from Theorem \ref{thm:FullChess} immediately  as a special case.

We first derive the characteristic equation of \eqref{eq:2.1}.
\begin{thm}
\label{thm:4.1}If $\alpha\notin\Spec(A)$ and $\beta\notin\Spec(B)$,
then the characteristic equation of \eqref{eq:2.1} for $\beta(\alpha)$
is 
\begin{equation}\label{eq:4.1}
\kappa^{2}\langle (A-\alpha)^{-1}\mathbf{z},\mathbf{z}\rangle \langle (B-\beta)^{-1}\mathbf{z},\mathbf{z}\rangle =1.
\end{equation}
\end{thm}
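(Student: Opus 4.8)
The plan is to start from the block system \eqref{eq:2.1} and use the assumption $\alpha\notin\Spec(A)$ to eliminate $\mathbf{u}$. From the first block row $(A-\alpha)\mathbf{u}=-\kappa P\mathbf{v}$ we get $\mathbf{u}=-\kappa(A-\alpha)^{-1}P\mathbf{v}$, and substituting into the second block row gives, exactly as in \eqref{eq:2.2}, the reduced eigenvalue problem $\bigl(B-\kappa^2 P(A-\alpha)^{-1}P\bigr)\mathbf{v}=\beta\mathbf{v}$. So $(\alpha,\beta)\in\Spec_p(M)$ with $\alpha\notin\Spec(A)$ if and only if $\beta$ is an eigenvalue of the $n\times n$ matrix $B-\kappa^2 P(A-\alpha)^{-1}P$.

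Next I would exploit that $P=\langle\,\cdot\,,\mathbf{z}\rangle\mathbf{z}$ is rank one. The operator $P(A-\alpha)^{-1}P$ is then also rank one: for any $\mathbf{w}$, $P(A-\alpha)^{-1}P\mathbf{w}=\langle\mathbf{w},\mathbf{z}\rangle\,\langle(A-\alpha)^{-1}\mathbf{z},\mathbf{z}\rangle\,\mathbf{z}=R(\alpha)\langle\mathbf{w},\mathbf{z}\rangle\mathbf{z}=R(\alpha)P\mathbf{w}$, using the notation $R(\alpha)$ from \eqref{eq:3.4}. Hence the reduced problem collapses to $\bigl(B-\kappa^2 R(\alpha)P\bigr)\mathbf{v}=\beta\mathbf{v}$, i.e.\ $\beta$ is an eigenvalue of a rank-one perturbation of $B$.

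Then I would pass to the resolvent of $B$. Since $\beta\notin\Spec(B)$, the equation $(B-\beta)\mathbf{v}=\kappa^2 R(\alpha)\langle\mathbf{v},\mathbf{z}\rangle\mathbf{z}$ forces $\langle\mathbf{v},\mathbf{z}\rangle\neq 0$ (otherwise $\mathbf{v}=0$), and gives $\mathbf{v}=\kappa^2 R(\alpha)\langle\mathbf{v},\mathbf{z}\rangle(B-\beta)^{-1}\mathbf{z}$. Taking the inner product with $\mathbf{z}$ and cancelling the nonzero scalar $\langle\mathbf{v},\mathbf{z}\rangle$ yields $1=\kappa^2 R(\alpha)\langle(B-\beta)^{-1}\mathbf{z},\mathbf{z}\rangle$, which is exactly \eqref{eq:4.1}. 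For the converse, if \eqref{eq:4.1} holds one sets $\mathbf{v}:=(B-\beta)^{-1}\mathbf{z}$ and $\mathbf{u}:=-\kappa(A-\alpha)^{-1}P\mathbf{v}$ and checks directly that \eqref{eq:2.1} is satisfied with $\mathbf{v}\neq 0$.

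I do not expect any serious obstacle here; the statement is essentially a Schur-complement computation combined with the rank-one structure. The only point needing a little care is the bookkeeping of which vector vanishes when: one must verify that a nontrivial solution of \eqref{eq:2.1} genuinely has $\mathbf{v}\neq 0$ and $\langle\mathbf{v},\mathbf{z}\rangle\neq 0$ (both of which follow from $\beta\notin\Spec(B)$ and $\alpha\notin\Spec(A)$ together with the structure of the equations), so that the scalar factor being cancelled is legitimately nonzero and \eqref{eq:4.1} is equivalent — not merely necessary — for a pair-eigenvalue with $\alpha\notin\Spec(A)$, $\beta\notin\Spec(B)$.
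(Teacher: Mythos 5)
Your proposal is correct and follows essentially the same route as the paper: eliminate $\mathbf{u}$ via the Schur complement to get \eqref{eq:2.2}, use the rank-one structure of $P$ to reduce to a scalar condition, and pass through $(B-\beta)^{-1}\mathbf{z}$ to arrive at \eqref{eq:4.1}. Your version is in fact slightly more careful than the paper's (which fixes the scalar to $1$ by choosing $\mathbf{v}=(B-\beta)^{-1}\mathbf{z}$), since you verify both directions and justify that $\langle\mathbf{v},\mathbf{z}\rangle\neq 0$ before cancelling, but the underlying argument is the same.
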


\begin{proof}
Re-writing the equation \eqref{eq:2.2} as 
\begin{equation}
 (B-\beta )\mathbf{v}=\kappa^{2}P (A-\alpha )^{-1}P\mathbf{v}\label{eq:4.3}
\end{equation}
and then using the information that $P$ is a projection, we obtain 
\begin{align*}
 (B-\beta )\mathbf{v} =\kappa^{2} \langle \mathbf{v},\mathbf{z} \rangle P (A-\alpha )^{-1}\mathbf{z} =\kappa^{2} \langle \mathbf{v},\mathbf{z} \rangle  \langle  (A-\alpha )^{-1}\mathbf{z},\mathbf{z} \rangle \mathbf{z}
\end{align*}
which implies
\[
\mathbf{v}=\kappa^{2} \langle \mathbf{v},\mathbf{z} \rangle  \langle  (A-\alpha )^{-1}\mathbf{z},\mathbf{z} \rangle  (B-\beta )^{-1}\mathbf{z}.
\]
Now since the term $\kappa^{2} \langle \mathbf{v},\mathbf{z} \rangle  \langle  (A-\alpha )^{-1}\mathbf{z},\mathbf{z} \rangle $
is a constant, we can fix it as 
\begin{equation}
\kappa^{2} \langle \mathbf{v},\mathbf{z} \rangle  \langle  (A-\alpha )^{-1}\mathbf{z},\mathbf{z} \rangle =1\label{eq:4.4}
\end{equation}
by setting 
$\mathbf{v}:= (B-\beta )^{-1}\mathbf{z}$. Substituting this into \eqref{eq:4.4}, we arrive at \eqref{eq:4.1}.
\end{proof}

The next lemma shows that $(\Gamma_{A}\times\mathbb{C})\cup (\mathbb{C}\times\Gamma_{B})\subset\Spec_{p} (M )$, strengthening in fact the claim of   
Theorem \ref{thm:FullChess}.

\begin{lem}\label{lem:StraightLines} If $\alpha\in\Gamma_{A}$, then 
$(\alpha, \beta)\in\Spec_{p} (M )$ for all $\beta\in\mathbb{C}$. Similarly if $\beta\in\Gamma_{B}$, then 
$(\alpha, \beta)\in\Spec_{p} (M )$ for all $\alpha\in\mathbb{C}$.
\end{lem}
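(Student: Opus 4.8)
The plan is to exhibit, for a given $\alpha\in\Gamma_A$ and \emph{any} $\beta\in\mathbb C$, an explicit non-trivial solution $\binom{\mathbf u}{\mathbf v}$ of the system \eqref{eq:1.3}--\eqref{eq:1.4}. By definition of $\Gamma_A$ there is an eigenvector $\boldsymbol\varphi\in\Phi_{A,\alpha}$ with $\boldsymbol\varphi\neq 0$ and $\langle\mathbf z,\boldsymbol\varphi\rangle=0$, i.e. $P\boldsymbol\varphi=0$. The natural guess is to look for a solution with $\mathbf v=0$: then \eqref{eq:1.4} becomes $C^*\mathbf u=0$, and \eqref{eq:1.3} becomes $(A-\alpha)\mathbf u=-C\mathbf v=0$. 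Setting $\mathbf u=\boldsymbol\varphi$ satisfies $(A-\alpha)\boldsymbol\varphi=0$ automatically, and since $C=\kappa P$ (so $C^*=\kappa P$ as $P$ is an orthogonal projection) we get $C^*\boldsymbol\varphi=\kappa P\boldsymbol\varphi=0$. Hence $\binom{\boldsymbol\varphi}{0}$ is a non-trivial solution of \eqref{eq:2.1} for every $\beta$, which is exactly the assertion $(\alpha,\beta)\in\Spec_p(M)$.

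The second statement is completely symmetric: if $\beta\in\Gamma_B$, pick $\boldsymbol\psi\in\Phi_{B,\beta}$ with $\boldsymbol\psi\neq0$ and $\langle\mathbf z,\boldsymbol\psi\rangle=0$, and take $\binom{\mathbf u}{\mathbf v}=\binom{0}{\boldsymbol\psi}$. Then \eqref{eq:1.3} reads $(A-\alpha)\,0=-C\boldsymbol\psi=-\kappa P\boldsymbol\psi=0$, and \eqref{eq:1.4} reads $(B-\beta)\boldsymbol\psi=-C^*\cdot 0=0$, both of which hold. So $(\alpha,\beta)\in\Spec_p(M)$ for all $\alpha\in\mathbb C$. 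One should spell out once, at the start, that because $C=\kappa P$ with $\kappa$ real and $P$ an orthogonal projection we have $C^*=\kappa P$, so the off-diagonal blocks in \eqref{eq:2.1} are literally $\kappa P$; this is the only slightly non-obvious bookkeeping point.

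There is essentially no obstacle here — the lemma is a direct unwinding of definitions, and the only thing to be careful about is that the claim is stated for all $\beta\in\mathbb C$ (not merely real), which is fine since the constructed solution does not involve $\beta$ at all. I would also add a one-line remark explaining how this strengthens Theorem \ref{thm:FullChess}: the theorem only asserts that real pair-eigenvalues lie \emph{on} the lines $\Gamma_A\times\mathbb R$ and $\mathbb R\times\Gamma_B$ (among other places), whereas Lemma \ref{lem:StraightLines} shows every point of these lines (indeed of their complexifications) is genuinely a pair-eigenvalue, so the inclusion is an equality on those lines. If one wanted extra rigidity one could also note that for $\alpha\in\Gamma_A$ the curve description breaks down precisely because $\alpha$ may fail to be in the reduced mesh, which is consistent with the exclusion of $\widetilde\Gamma_A\setminus\Delta_A$ from the $x_i$.
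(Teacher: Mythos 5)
Your proposal is correct and is essentially identical to the paper's own argument: the paper also takes $\mathbf{u}=\boldsymbol{\varphi}$ with $P\boldsymbol{\varphi}=0$ and $\mathbf{v}=\mathbf{0}$, and verifies directly that this gives a pair-eigenvector for every $\beta\in\mathbb{C}$ (handling the second statement by symmetry, just as you do). The extra bookkeeping remark that $C^*=\kappa P$ is fine but adds nothing beyond what the paper leaves as an ``immediate check.''
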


\begin{proof} We prove the first of these statements, the second is similar. 
Let $\alpha\in\Gamma_A$, and let $\boldsymbol{\varphi}\in\Phi_{A,\alpha}$ such that $\langle \boldsymbol{\varphi}, \mathbf{z}\rangle=0$. An immediate check shows that
$
\begin{pmatrix}
\mathbf{u}\\
\mathbf{v}
\end{pmatrix}
=
\begin{pmatrix}
\boldsymbol{\varphi}\\
\mathbf{0}
\end{pmatrix}
$
is a pair-eigenvector of  \eqref{eq:2.1} for a pair-eigenvalue $(\alpha, \beta)$ with an arbitrary $\beta\in\mathbb{C}$.
\end{proof}

In Lemma \ref{lem:StraightLines} we show what happens when $\alpha\in\Gamma_{A}$ or $\beta\in\Gamma_{B}$; our next result shows which points $(\alpha,\beta)$ may lie in $\Spec_p(M)$ when  $\alpha$ is an eigenvalue of $A$ outside of $\Gamma_A$.

\begin{lem} \label{lem:intersection}
Let $\alpha\in\Spec(A)\setminus\Gamma_{A}$, and $\beta\not\in\Gamma_B$. Then $(\alpha,\beta)\in\Spec_{p}(M) $ if and only if $\beta=\widehat{\beta}\in\Spec(B_{\bot,\bot})$.
Similarly, if  $\beta\in\Spec(B)\setminus\Gamma_{B}$,  and $\alpha\not\in\Gamma_A$, then $(\alpha,\beta)\in\Spec_{p}(M)$ if and only if $\alpha=\widehat{\alpha}\in\Spec(A_{\bot,\bot})$.
\end{lem}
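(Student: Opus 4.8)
The plan is to analyse the system \eqref{eq:1.3}--\eqref{eq:1.4} directly at a point $\alpha\in\Spec(A)\setminus\Gamma_A$. Since $\alpha$ is an eigenvalue of $A$ we cannot invert $A-\alpha$, so instead I would decompose both equations along $Z=\mathrm{Span}\{\mathbf z\}$ and $Z^\bot$ as in the proof of Lemma \ref{lem:3.1}. Write $\mathbf v=\mathbf v_\Vert+\mathbf v_\bot$ with $\mathbf v_\Vert=P\mathbf v$, $\mathbf v_\bot=Q\mathbf v$. Because $C=\kappa P$, equation \eqref{eq:1.3} reads $(A-\alpha)\mathbf u=-\kappa\langle\mathbf v,\mathbf z\rangle\mathbf z$, and equation \eqref{eq:1.4} reads $(B-\beta)\mathbf v=-\kappa\langle\mathbf u,\mathbf z\rangle\mathbf z$. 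The right-hand side of the first equation lies in $Z$, and $\alpha\notin\Gamma_A$ means (by the definition of $\Gamma_A$) that $\mathbf z$ is \emph{not} orthogonal to the full eigenspace $\Phi_{A,\alpha}$; in fact $\alpha\notin\Gamma_A$ forces $\alpha$ to be a simple eigenvalue of $A$ with eigenvector $\boldsymbol\varphi$ satisfying $\langle\mathbf z,\boldsymbol\varphi\rangle\neq0$ (this follows since $\Gamma_A$ contains all multiple eigenvalues).

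\textbf{Step 1: the forward direction.} Suppose $(\alpha,\beta)\in\Spec_p(M)$ with a non-trivial $(\mathbf u,\mathbf v)$. Apply the Fredholm alternative to $(A-\alpha)\mathbf u=-\kappa\langle\mathbf v,\mathbf z\rangle\mathbf z$: solvability requires $-\kappa\langle\mathbf v,\mathbf z\rangle\mathbf z\perp\boldsymbol\varphi$, i.e. $\langle\mathbf v,\mathbf z\rangle\langle\mathbf z,\boldsymbol\varphi\rangle=0$, and since $\langle\mathbf z,\boldsymbol\varphi\rangle\neq0$ we get $\langle\mathbf v,\mathbf z\rangle=0$, hence $(A-\alpha)\mathbf u=0$, so $\mathbf u\in\Phi_{A,\alpha}=\mathrm{Span}\{\boldsymbol\varphi\}$. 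Now feed this into \eqref{eq:1.4}: $(B-\beta)\mathbf v=-\kappa\langle\mathbf u,\mathbf z\rangle\mathbf z$. We must split into two subcases. If $\mathbf u\neq0$ then $\langle\mathbf u,\mathbf z\rangle\neq0$ (as $\mathbf u$ is a multiple of $\boldsymbol\varphi$), so $\mathbf v=-\kappa\langle\mathbf u,\mathbf z\rangle(B-\beta)^{-1}\mathbf z$ when $\beta\notin\Spec(B)$ — but then $\langle\mathbf v,\mathbf z\rangle=-\kappa\langle\mathbf u,\mathbf z\rangle R_B(\beta)=0$ forces $R_B(\beta)=\langle(B-\beta)^{-1}\mathbf z,\mathbf z\rangle=0$, which by Lemma \ref{lem:3.1} gives $\beta\in\Spec(B_{\bot,\bot})$. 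If instead $\beta\in\Spec(B)$, then since $\beta\notin\Gamma_B$ the same Fredholm argument applied to $(B-\beta)\mathbf v=-\kappa\langle\mathbf u,\mathbf z\rangle\mathbf z$ forces $\langle\mathbf u,\mathbf z\rangle=0$, hence $\mathbf u=0$ and $\mathbf v\in\Phi_{B,\beta}$ with $\langle\mathbf v,\mathbf z\rangle=0$ (from Step 1's conclusion), which says $\beta\in\Gamma_B$ — a contradiction. So $\beta\notin\Spec(B)$ and the previous subcase applies. Finally if $\mathbf u=0$, then $\mathbf v\neq0$ solves $(B-\beta)\mathbf v=0$ with $\langle\mathbf v,\mathbf z\rangle=0$, so again $\beta\in\Gamma_B$, excluded. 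Either way $\beta\in\Spec(B_{\bot,\bot})$.

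\textbf{Step 2: the converse.} Suppose $\beta=\widehat\beta\in\Spec(B_{\bot,\bot})$ with $\beta\notin\Gamma_B$. Then $\beta\notin\Spec(B)$ by Lemma \ref{lem:CommonSpec} (the hypothesis $\beta\notin\Gamma_B$ is vacuous here if $\Gamma_B=\emptyset$; in general $\beta\in\Spec(B)\cap\Spec(B_{\bot,\bot})$ would put $\beta\in\Gamma_B$ by the proof of Lemma \ref{lem:CommonSpec}, contradiction). By Lemma \ref{lem:3.1} applied to $B$, we have $R_B(\beta)=\langle(B-\beta)^{-1}\mathbf z,\mathbf z\rangle=0$ and $(B-\beta)^{-1}\mathbf z=c\widehat{\boldsymbol\psi}$ for the eigenvector $\widehat{\boldsymbol\psi}$ of $B_{\bot,\bot}$ at $\beta$. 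Now set $\mathbf u=\boldsymbol\varphi$ (the eigenvector of $A$ at $\alpha$) and $\mathbf v=-\kappa\langle\boldsymbol\varphi,\mathbf z\rangle(B-\beta)^{-1}\mathbf z$. Then $\langle\mathbf v,\mathbf z\rangle=-\kappa\langle\boldsymbol\varphi,\mathbf z\rangle R_B(\beta)=0$, so $C\mathbf v=\kappa P\mathbf v=0$ and \eqref{eq:1.3} reads $(A-\alpha)\boldsymbol\varphi=0$, true. And $(B-\beta)\mathbf v=-\kappa\langle\boldsymbol\varphi,\mathbf z\rangle\mathbf z=-\kappa\langle\mathbf u,\mathbf z\rangle\mathbf z=-C^*\mathbf u$, which is \eqref{eq:1.4}. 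The pair $(\mathbf u,\mathbf v)$ is non-trivial since $\mathbf u=\boldsymbol\varphi\neq0$, so $(\alpha,\beta)\in\Spec_p(M)$. The second half of the lemma follows by the $\alpha\leftrightarrow\beta$ symmetry of \eqref{eq:2.1}.

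\textbf{Main obstacle.} The only real subtlety is the bookkeeping in Step 1 to rule out the degenerate branches where $\mathbf u=0$ or $\beta\in\Spec(B)$: one must verify that the hypothesis $\beta\notin\Gamma_B$ is exactly what closes off those cases, and that $\alpha\notin\Gamma_A$ genuinely upgrades to ``$\alpha$ simple with $\langle\mathbf z,\boldsymbol\varphi\rangle\neq0$'' (using that $\Gamma_A$ contains all multiple eigenvalues, noted right after the definition of $\Gamma_X$). Everything else is the Fredholm alternative plus Lemma \ref{lem:3.1}.
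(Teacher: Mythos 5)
Your proof is correct, and the first half is the same move as the paper's: pairing $(A-\alpha)\mathbf{u}=-\kappa\langle\mathbf{v},\mathbf{z}\rangle\mathbf{z}$ against $\boldsymbol{\varphi}\in\Phi_{A,\alpha}$ to force $\langle\mathbf{v},\mathbf{z}\rangle=0$ and $\mathbf{u}\in\mathrm{Span}\{\boldsymbol{\varphi}\}$ (both arguments, yours and the paper's, implicitly assume $\kappa\neq0$, which is needed for the lemma to be true at all). After that the routes diverge: the paper applies $Q$ and $P$ to the second equation, obtaining $B_{\bot,\bot}\mathbf{v}=\beta\mathbf{v}$ together with the scalar compatibility condition \eqref{eq:PB}, and for the converse builds the eigenvector in the form $(a\boldsymbol{\varphi},b\widehat{\boldsymbol{\psi}})$ by solving \eqref{eq:ab}; you instead first rule out $\beta\in\Spec(B)$ using $\beta\notin\Gamma_B$, then invert $B-\beta$ and phrase everything through the vanishing of $R_B(\beta)=\langle(B-\beta)^{-1}\mathbf{z},\mathbf{z}\rangle$ and Lemma \ref{lem:3.1}. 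Your constructed eigenvector $\bigl(\boldsymbol{\varphi},-\kappa\langle\boldsymbol{\varphi},\mathbf{z}\rangle(B-\beta)^{-1}\mathbf{z}\bigr)$ is the paper's one in disguise, since $(B-\beta)^{-1}\mathbf{z}$ is a multiple of $\widehat{\boldsymbol{\psi}}$; the resolvent formulation has the advantage of connecting directly to the characteristic equation \eqref{eq:4.1}, the paper's projection argument the advantage of never needing invertibility of $B-\beta$ as a separate step.

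The one place to tighten is the citation of Lemma \ref{lem:3.1} in your Step 2. As stated, that lemma gives $R_B(\beta)=0$ only if you already know that $(B-\beta)^{-1}\mathbf{z}$ is a multiple of $\widehat{\boldsymbol{\psi}}$; the implication you actually need, namely $\widehat{\beta}\in\Spec(B_{\bot,\bot})\setminus\Spec(B)\Rightarrow R_B(\widehat{\beta})=0$, is true but requires one extra line: since $Q(B-\widehat{\beta})\widehat{\boldsymbol{\psi}}=(B_{\bot,\bot}-\widehat{\beta})\widehat{\boldsymbol{\psi}}=\mathbf{0}$, one has $(B-\widehat{\beta})\widehat{\boldsymbol{\psi}}=\langle B\widehat{\boldsymbol{\psi}},\mathbf{z}\rangle\mathbf{z}$, and the coefficient is nonzero (otherwise $\widehat{\boldsymbol{\psi}}$ would be an eigenvector of $B$ orthogonal to $\mathbf{z}$, i.e.\ $\widehat{\beta}\in\Gamma_B$), whence $(B-\widehat{\beta})^{-1}\mathbf{z}$ is a nonzero multiple of $\widehat{\boldsymbol{\psi}}$ and $R_B(\widehat{\beta})=0$. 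With that line added (or by quoting the computation inside the proof of Lemma \ref{lem:3.1} rather than its statement), your argument is complete.
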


\begin{proof} Once more, we only prove the first statement.
Let $\alpha \in \Spec(A)\setminus\Gamma_A$. Let us re-write \eqref{eq:1.3}, \eqref{eq:1.4} as
\begin{align}
 (A-\alpha )\mathbf{u}&=-\kappa \langle \mathbf{v},\mathbf{z} \rangle \mathbf{z},\label{eq:4.10}\\
\kappa \langle \mathbf{u},\mathbf{z} \rangle \mathbf{z}+ (B-\beta )\mathbf{v} & =\mathbf{0}.\label{eq:4.11}
\end{align}
Multiplying \eqref{eq:4.10} by $\boldsymbol{\varphi}\in \Phi_{A,\alpha}$, we get
\[
\langle (A-\alpha )\mathbf{u},\boldsymbol{\varphi}\rangle=\langle\mathbf{u}, (A-\alpha )\boldsymbol{\varphi}\rangle=0=-\kappa\langle\mathbf{v},\mathbf{z}\rangle\,\langle\mathbf{z},\boldsymbol{\varphi}\rangle.
\]
Since $\alpha\not\in\Gamma_A$, we have $\langle\mathbf{z},\boldsymbol{\varphi}\rangle\neq0$, and so $\langle\mathbf{v},\mathbf{z}\rangle=0$ (and so $P\mathbf{v}=\mathbf{0}$), and by \eqref{eq:4.10}, $\mathbf{u}=a\boldsymbol{\varphi}$, where the constant $a$ may or may not be zero. 

Substituting now $\mathbf{u}=a\boldsymbol{\varphi}$ into \eqref{eq:4.11}, and applying the projections $Q$ and $P$ to the result, we obtain
\begin{align}
B_{\bot,\bot}\mathbf{v}&=\beta\mathbf{v},\label{eq:QB}\\
B_{\bot,\Vert}\mathbf{v}&=-\kappa a \langle\mathbf{z},\boldsymbol{\varphi}\rangle\mathbf{z}.\label{eq:PB}
\end{align}
 
If $\beta\not\in\Spec(B_{\bot,\bot})$, then by \eqref{eq:QB}, $\mathbf{v}=\mathbf{0}$, and thus $a=0$, and so $\mathbf{u}=\mathbf{0}$, and $(\alpha,\beta)\not\in\Spec_p(M)$, proving the ``only if'' part of the statement.

If $\beta=\widehat{\beta}\in\Spec(B_{\bot,\bot})$, and  $\widehat{\boldsymbol{\psi}}\in\Phi_{B_{\bot,\bot},\widehat{\beta}}$, we choose 
$\mathbf{v}=b\widehat{\boldsymbol{\psi}}$; we claim that we may choose constants $a,b$ such that $a^2+b^2\neq 0$ to satisfy \eqref{eq:PB}. 
After multiplying by $\mathbf{z}$, it becomes
\begin{equation}\label{eq:ab}
b \langle B_{\bot,\Vert}\widehat{\boldsymbol{\psi}},\mathbf{z}\rangle=-\kappa a \langle\mathbf{z},\boldsymbol{\varphi}\rangle.
\end{equation}
The scalar product on the right-hand side is non-zero by our assumption $\alpha\not\in \Gamma_A$. The scalar product on the left-hand side is non-zero since 
otherwise $\widehat{\beta}\in\Spec(B)$, and therefore $\beta\in\Gamma_B$ by Lemma  \ref{lem:CommonSpec}, again contradicting our assumptions. 
Thus we can always choose $a,b$ with $a^2+b^2\neq 0$ in order to satisfy \eqref{eq:ab}.
\end{proof}

We can now prove our main result.

\begin{proof}[Proof of the full Chess Board Theorem] The eigenvalues inside $(\Gamma_{A}\times\mathbb{R})\cup (\mathbb{R}\times\Gamma_B)$ have been already accounted for by Lemma \ref{lem:StraightLines}, so we will be working outside this set.

Recall the characteristic equation \eqref{eq:4.1}. 
Since it  needs to be satisfied, $ \langle  (A-\alpha )^{-1}\mathbf{z},\mathbf{z} \rangle $
and $ \langle  (B-\beta )^{-1}\mathbf{z},\mathbf{z} \rangle $ have to have the same sign  for real pair-eigenvalues.
It can be seen from \eqref{eq:3.4} that $ \langle  (A-\alpha )^{-1}\mathbf{z},\mathbf{z} \rangle $
is positive when $\alpha<\alpha_{1}$, and by 
Lemma \ref{lem:3.4}, it only changes sign every time when $\alpha$ passes through $x_p$, $p=1,\dots,s$. Similarly, $ \langle  (B-\beta )^{-1}\mathbf{z},\mathbf{z} \rangle $
is positive when $\beta<\beta_{1}$ and it only changes sign every time
when $\beta$ passes through $y_q$, $q=1,\dots,t$. Thus the only allowed regions for real $\alpha$ and $\beta$ are
when $ (\alpha,\beta )\in R_{p,q}$ with $p+q$ is even, proving, with account of Lemma \ref{lem:intersection},  the statements (a) and (b).

Statement (c) follows immediately from \eqref{eq:4.1} and Lemma \ref{lem:3.4}.

To prove (d), 
we differentiate the characteristic equation \eqref{eq:4.1} with respect to $\alpha$, arriving at
\[
\kappa^{2} \langle  (A-\alpha )^{-2}\mathbf{z},\mathbf{z} \rangle  \langle  (B-\beta )^{-1}\mathbf{z},\mathbf{z} \rangle +\kappa^{2} \langle  (A-\alpha )^{-1}\mathbf{z},\mathbf{z} \rangle  \langle  (B-\beta )^{-2}\mathbf{z},\mathbf{z} \rangle \frac{d\beta}{d\alpha}=0,
\]
so that 
\[
\frac{d\beta}{d\alpha}=-\frac{ \langle  (A-\alpha )^{-2}\mathbf{z},\mathbf{z} \rangle  \langle  (B-\beta )^{-1}\mathbf{z},\mathbf{z} \rangle }{ \langle  (A-\alpha )^{-1}\mathbf{z},\mathbf{z} \rangle  \langle  (B-\beta )^{-2}\mathbf{z},\mathbf{z} \rangle },
\]
and re-arranging with account of \eqref{eq:4.1}, we can re-write $\beta'$ as in \eqref{eq:mon}. 

To see that $\beta'<0$, we observe from  \eqref{eq:4.1} that $ \langle  (A-\alpha )^{-1}\mathbf{z},\mathbf{z} \rangle \neq0$
and $ \langle  (B-\beta )^{-1}\mathbf{z},\mathbf{z} \rangle \neq0$.
Also, 
\[
 \langle  (A-\alpha )^{-2}\mathbf{z},\mathbf{z} \rangle = \langle  (A-\alpha )^{-1}\mathbf{z}, (A-\alpha )^{-1}\mathbf{z} \rangle = \Vert  (A-\alpha )^{-1}\mathbf{z} \Vert,
\]
which is always positive by \eqref{eq:resolvent},  and similarly $ \langle  (B-\beta )^{-2}\mathbf{z},\mathbf{z} \rangle >0$, and therefore $ d\beta / d\alpha <0$.
\end{proof}

\begin{proof}[Proof of Theorem \ref{thm:boards}]
By the characteristic equation \eqref{eq:4.1}, we have, as $\kappa \rightarrow 0$, that either
$
\langle (A-\alpha_\kappa)^{-1}\mathbf{z},\mathbf{z}\rangle \rightarrow \infty
$
or
$
\langle (B-\beta_\kappa)^{-1}\mathbf{z},\mathbf{z}\rangle \rightarrow \infty
$,
and the first statement follows by Lemma \ref{lem:3.4} and standard perturbation techniques.
Similarly, if $\kappa \rightarrow \infty$, then either 
$
\langle (A-\alpha_\kappa)^{-1}\mathbf{z},\mathbf{z}\rangle \rightarrow 0
$
or
$
\langle (B-\beta_\kappa)^{-1}\mathbf{z},\mathbf{z}\rangle \rightarrow 0
$,
and the result follows from Lemma \ref{lem:3.1}.
\end{proof}

\section{Examples}

\subsection{Motivation and Example 1}

The main motivation of this paper comes from the particular non-self-adjoint problem which was considered in \cite{DaLe}, with corresponding
change of notations. Consider the $n\times n$ matrices 
\[
A_1=\begin{pmatrix}
0 & 1\\
1 & 0 & \ddots\\
 & \ddots & \ddots & 1\\
 &  & 1 & 0
\end{pmatrix},
\qquad P_1 = \begin{pmatrix}
0\\
 & \ddots\\
 &  & 0\\
 &  &  & 1
\end{pmatrix}.
\]
We set $A=B=A_1$ and $C=\kappa P_1$ (i.e. $\mathbf{z}=(0,\dots,0,1)^T$). The eigenvalues of $A_1$ are given by 
\[
\alpha_{j}=2\cos  \left( \frac{\pi j}{n+1} \right), \quad j=1,\ldots,n,
\]
and the eigenvalues of $(A_1)_{\bot,\bot}$ are given by the same formula with $n$ replaced by $n-1$.

In fact, \cite{DaLe} studied the spectrum of a non-self-adjoint problem
\begin{equation} \label{eq:nsa}
\begin{pmatrix}
A_1+\gamma & P_1 \\
-P_1 & -A_1-\gamma
\end{pmatrix}
\begin{pmatrix}
\mathbf{u} \\
\mathbf{v}
\end{pmatrix}
= \lambda 
\begin{pmatrix}
\mathbf{u} \\
\mathbf{v}
\end{pmatrix},
\end{equation}
where $\lambda$ is a spectral parameter and $\gamma \in \mathbb{R}$ is fixed; the problem \eqref{eq:nsa} relates to \eqref{eq:2.1} by setting $\kappa=1$ and
\begin{equation} \label{eq:Change}
\alpha= \lambda - \gamma; \qquad \beta= -\lambda-\gamma.
\end{equation}
We shall return to the comparison of the two problems and especially to non-real $\lambda$ in Section \ref{sec:nsa}.

\begin{figure}[!htb]
\includegraphics[width=0.49\textwidth]{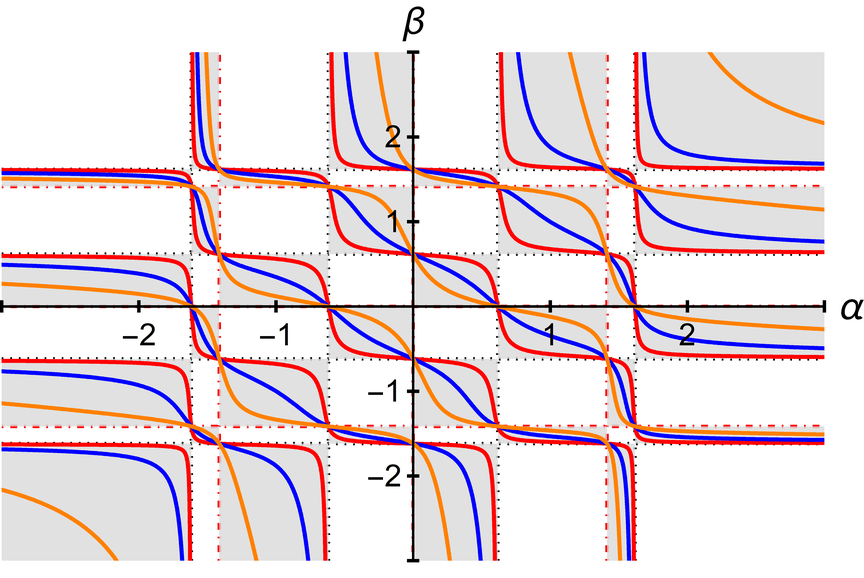}
\hfill
\includegraphics[width=0.49\textwidth]{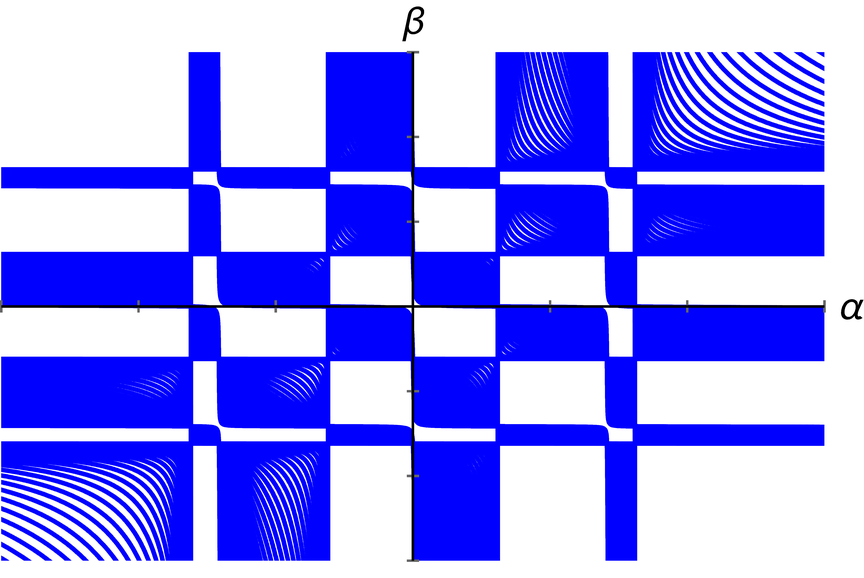}
\caption{\label{fig:forexample1} $A=B=A_1$ for $n=4$. Left:  $\Spec_{p}(M)$  with $\kappa=0.4$ (red curves), $\kappa=1$ (blue curves) and $\kappa=2$ (orange curves). 
Right: the superimposition of $\Spec_{p}(M)$  for the values of $\kappa$ from $0.001$ to $10$ with the step-size of $0.1$.}
\end{figure}

Note that $\Gamma_{A_1} = \emptyset $ and the general spectral picture in the $(\alpha,\beta)$-plane including the rectangular mesh can be seen in Figures \ref{fig:forexample1} and \ref{fig:GeneralComplex}. We see that the results of the simple Chess Board Theorem hold. 

\subsection{Example 2} \label{sec:ex2}
This example illustrates the case when $\Gamma = \emptyset $. We denote by $\Diag(\lambda_1, \dots, \lambda_n)$ a diagonal matrix composed of the entries $\lambda_1, \dots, \lambda_n$. Let $A_2= \Diag(-1,1)$ and $B_1= \Diag(1,3)$. Set $\mathbf{z}=\left( 1/\sqrt{5}, 2/\sqrt{5}\right)^T$. Then $\Gamma_{A_2}=\Gamma_{B_1} = \emptyset$. Also
$\Spec((A_2)_{\bot,\bot})=\left \{ -3/5 \right \}$,
$\Spec((B_1)_{\bot,\bot})=\left \{ 7/5 \right \}$.
The spectral picture can be seen in the left of Fig. \ref{fig:forexamples2and3}, and we see that the simple Chess Board Theorem (Theorem \ref{thm:SimpleChess}) holds.

\subsection{ Example 3}
This example illustrates two cases; the case when $\Gamma \neq \emptyset$ and $\widetilde{\Gamma}=\emptyset$, and also the case when $\Gamma =\widetilde{\Gamma} \neq \emptyset$. Consider $A_3=\Diag(-1,-1)$ and $B_1$. Set $\mathbf{z}= (1,0)^T$. Then $\Gamma_{A_3}=\{-1\} $, $\widetilde{\Gamma}_{A_3}=\emptyset$, and $\Gamma_{B_1} =\widetilde{\Gamma}_{B_1}= \{ 3 \}$. Also $\Spec((A_3)_{\bot,\bot}) = \{-1 \}$,
$\Spec((B_1)_{\bot,\bot})=\{ 3\}$.
The spectral picture is shown in the right of Fig. \ref{fig:forexamples2and3}. We see that $\Spec_p(M)$ has an additional vertical straight line at $\alpha =-1$, and there is also a blow-up at $\alpha =-1$. This line is included in the mesh since $\mathbf{z}$ is orthogonal to one eigenvector but $\mathbf{z} \not\perp \Phi_{A_3,-1}$. On the other hand, there is a horizontal straight line passing through $\beta=3$ which is not included in the mesh since $B_1$ has simple eigenvalues and $\mathbf{z} \perp \Phi_{B_1,3}$.

\begin{figure}[!htb]
\includegraphics[width=0.49\textwidth]{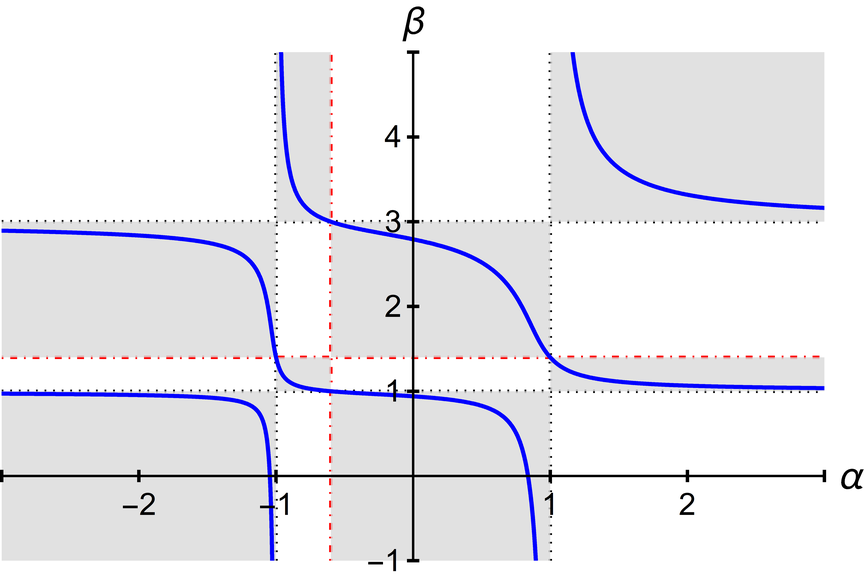}
\hfill
\includegraphics[width=0.49\textwidth]{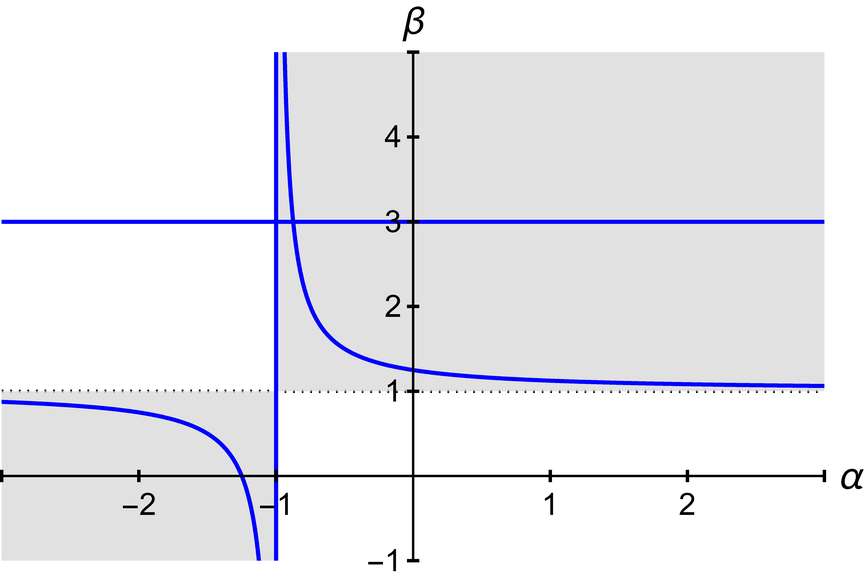}
\caption{\label{fig:forexamples2and3} $\Spec_{p}(M)$ for two cases. Left: with $A=A_2$, $B=B_1$ and $\kappa = 2/3$.
Right: with $A=A_3$, $B=B_1$ and $\kappa = 1/2$.}
\end{figure}

\subsection{Example 4}
This example illustrates the case when $\Gamma$, $\widetilde{\Gamma} \neq \emptyset$ and $\Gamma \neq \widetilde{\Gamma}$. Take
\begin{equation}\label{eq:B2}
A_4=\Diag(1,1,3,3),
\qquad
B_2=
\begin{pmatrix}
-2 & 1 & 0 & 0 \\
1 & -1 & 0 & 0 \\
0 & 0 & 2 & 1 \\
0 & 0 & 1 & 3
\end{pmatrix}.
\end{equation}
Set $ \mathbf{z} = (0,0,0,1)^T$. Then we have 
$\Spec(B_2)=\{ -\left(3\pm\sqrt{5}\right)/2,\left(5 \pm \sqrt{5}\right)/2 \}$ and $ \Gamma_{A_4} = \{ 1,3 \}$, $\widetilde{\Gamma}_{A_4} = \{1\}$ and $\Gamma_{B_2}=\widetilde{\Gamma}_{B_2} = \{ -(3 \pm \sqrt{5})/2\}$. We also obtain $\Spec((A_4)_{\bot,\bot})=\{1,3\}$, where the eigenvalue $1$ has multiplicity two, and $\Spec((B_2)_{\bot,\bot})= \{\left(-3 \pm \sqrt{5}\right)/2,2 \}$. The spectral picture is shown in the left of Fig. \ref{fig:forexamples4and5}. As expected, there are two additional vertical straight lines: at $\alpha=1$, where is no blow-up and the line is not included in the mesh since $\mathbf{z}\perp \Phi_{{A_4} ,1}$; and at $\alpha=3$, where is a blow-up and the line is included in the mesh since $\mathbf{z} \not\perp \Phi_{{A_4} ,3}$. On the other hand, there are two additional horizontal lines at $\beta=-(3\pm\sqrt{5})/2$ which are not included in the mesh as $\mathbf{z}$ is orthogonal to the corresponding eigenspaces.

\subsection{Example 5}
This example illustrates the case when $\Delta \neq \emptyset$. Consider $A_5=\Diag(1,2,2,3)$, and $B_2$. Set $\mathbf{z} = \left(1/\sqrt{2},0,0,1/\sqrt{2}\right)^T$. Then $ \Gamma_{A_5} =\widetilde{\Gamma}_{A_5} = \{2\}$ and $\Gamma_{B_2}=\widetilde{\Gamma}_{B_2} = \emptyset$.
Also $\Spec((A_5)_{\bot,\bot})=\{2\}$, where the eigenvalue $2$ has multiplicity three, and 
$\Spec((B_2)_{\bot,\bot})= \{ \left(1 \pm \sqrt{13}\right)/2, 1/2 \}$. The spectral picture is shown in the right side of Fig. \ref{fig:forexamples4and5}. Since $\mathbf{z} \perp \Phi_{A_5,2}$, there is no blow-up at $\alpha=2$. Nevertheless, this line is included in the mesh as $\Delta_{A_5}= \{ 2 \}$, that is, $\dim \left(\Phi_{(A_5)_{\bot,\bot},2}\right) >\dim \left(\Phi_{A_5,2}\right)$.

\begin{figure}[!htb]
\includegraphics[width=0.49\textwidth]{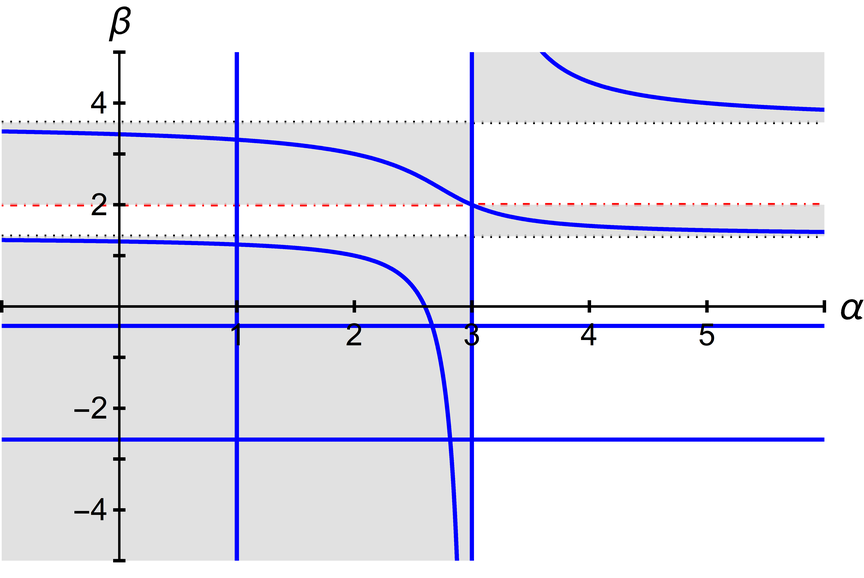}
\hfill
\includegraphics[width=0.49\textwidth]{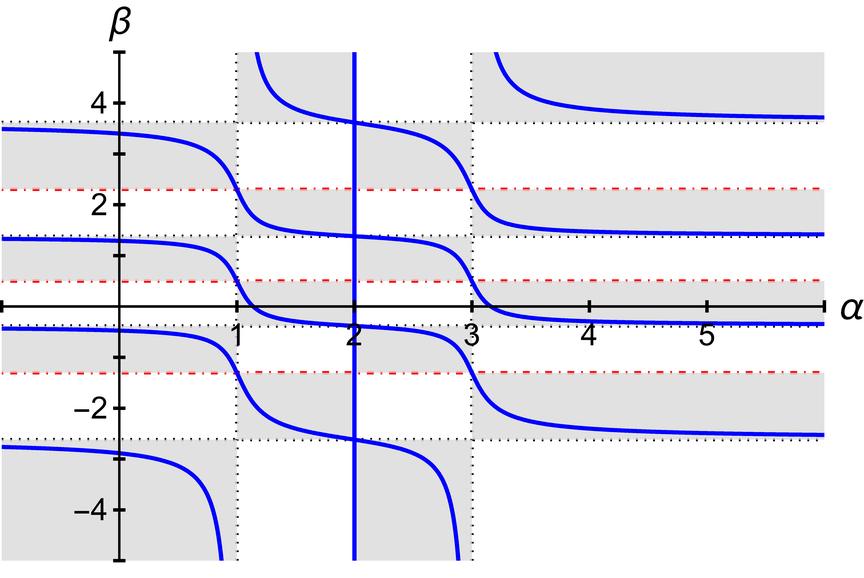}
\caption{\label{fig:forexamples4and5} $\Spec_{p}(M)$ with $\kappa = 1$ for two cases. Left: with $A=A_4$, $B=B_2$.
Right: with $A=A_5$, $B=B_2$.}
\end{figure}

\section{Relation to a non-self-adjoint problem} \label{sec:nsa}
We now return to the example studied in \cite{DaLe}. Generally speaking, there are $n$ complex $\beta (\alpha) \in \mathbb{C}$  for every $\alpha\in \Spec(A) \setminus \mathbb{C}$. We therefore limit our attention to pair-eigenvalues subject to the additional restriction
\begin{equation}\label{eq:Restriction}
\mathrm{Im}(\alpha+\beta)=0,
\end{equation}
which is equivalent to introducing the additional restriction $\gamma \in \mathbb{R}$, see \eqref{eq:Change}.

A general spectral picture of this non-self-adjoint problem in the $(\alpha,\beta)$-plane is illustrated in Figure \ref{fig:GeneralComplex}.
Red curves depict the real parts of non-real pair-eigenvalues $\mathrm{Re} \ \beta(\mathrm{Re} \ \alpha)$
such that \eqref{eq:Restriction} holds, which keeps all $(\alpha,\beta)\in\mathbb{R}^{2}$ in the
picture (shown in blue) and  also some non-real pair-eigenvalues. It is easily verified that the spectra are symmetric with respect to $(\alpha,\beta) \leftrightarrow (\beta,\alpha)$ and $(\alpha,\beta) \leftrightarrow (-\alpha, -\beta)$. 

\begin{figure} [!htb]
\begin{centering}
\includegraphics[width=0.96\textwidth]{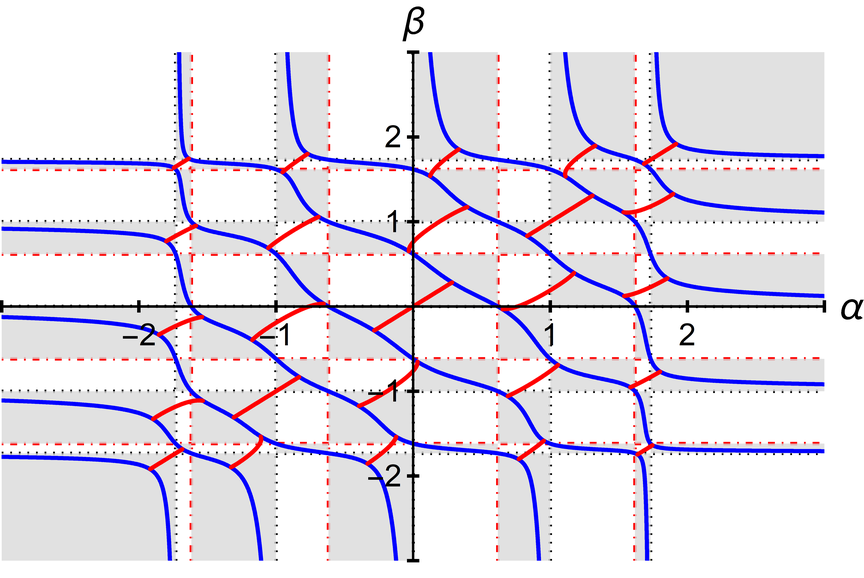}
\par\end{centering}
\caption{\label{fig:GeneralComplex}
 $\Spec_{p}(M)$ with $A=B=A_1$ and $C=P_1$ for $n=5$.}
\end{figure}

The real and non-real eigenvalue curves $\lambda(\gamma)$ may collide, with  two possible types of collisions: those when two real eigenvalues collide and produce a complex conjugate pair, called Type-A, and  those when a pair of complex
conjugate eigenvalues collide and become real, called Type-B,  see Figure \ref{fig:collision} for equivalents in $(\alpha,\beta)$-plane. 

\begin{figure}[!htb]
\includegraphics[width=0.24\textwidth]{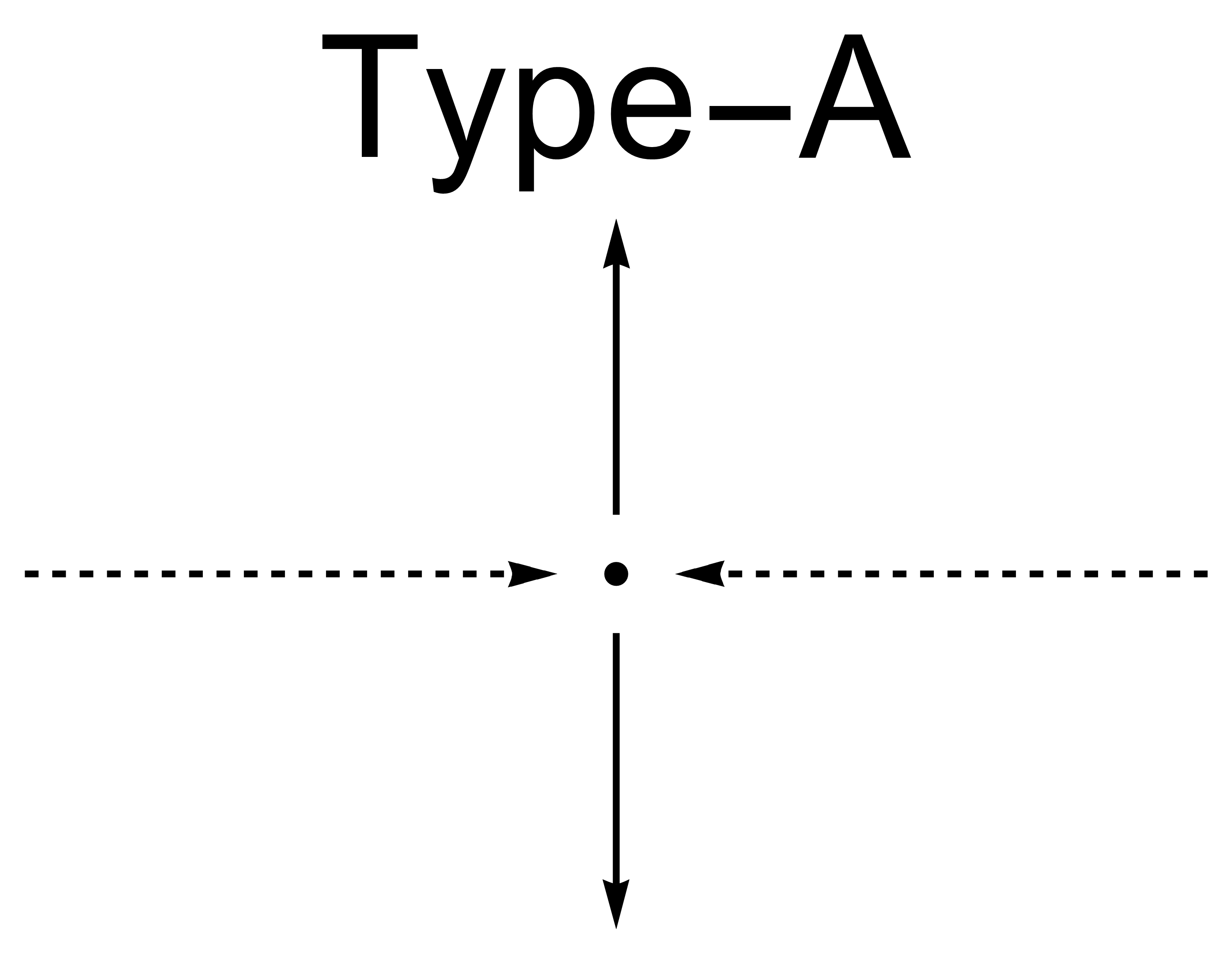}
\hfill
\includegraphics[width=0.24\textwidth]{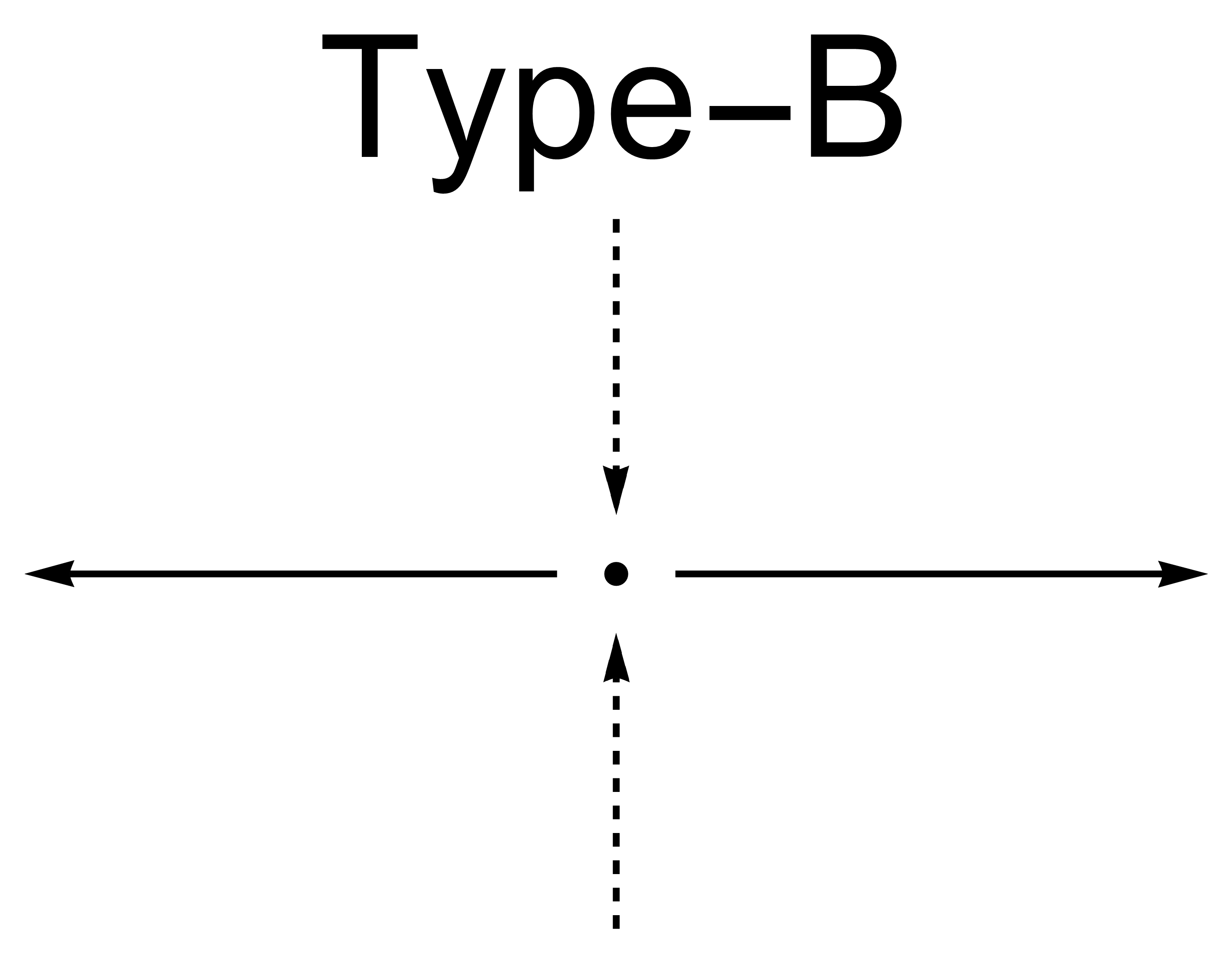}
\hfill
\includegraphics[width=0.24\textwidth]{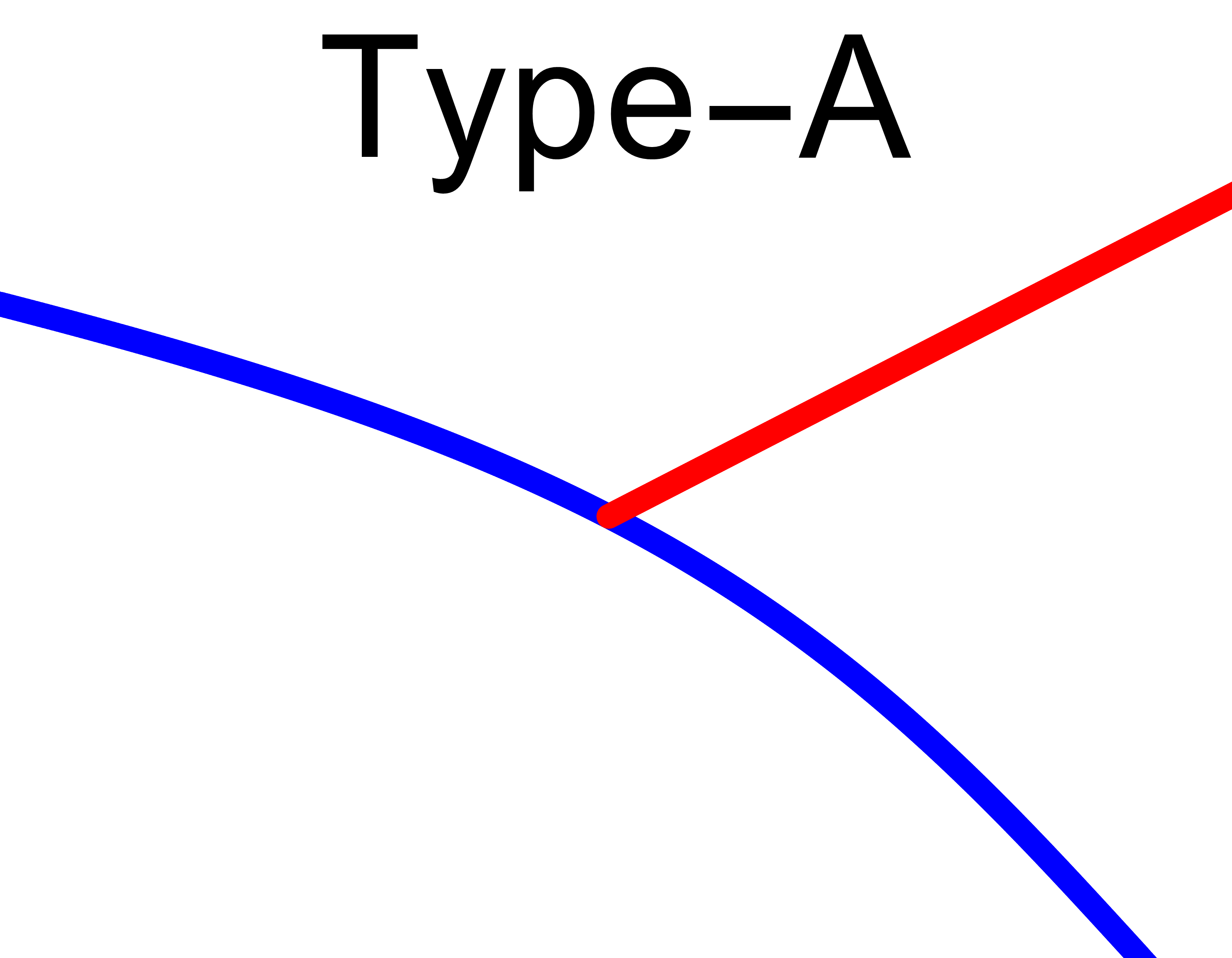}
\hfill
\includegraphics[width=0.24\textwidth]{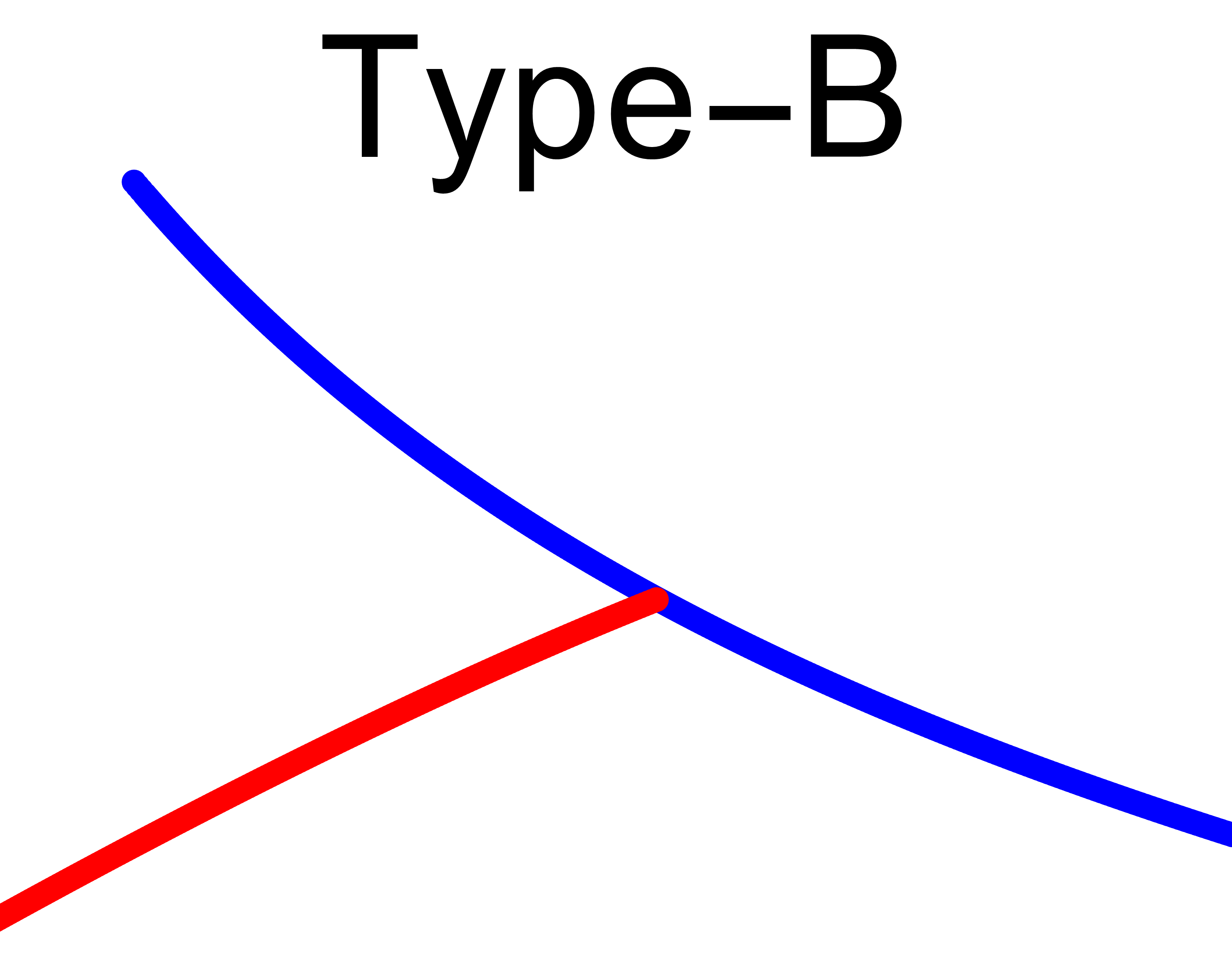}
\caption{\label{fig:collision} Left: the collisions in the $(\mathrm{Re}(\lambda),\mathrm{Im}(\lambda))$-plane.
Right: the collisions in the $(\alpha,\beta)$-plane.}
\end{figure}

\begin{lem}
The collisions happen at the points where $\frac{d\beta}{d\alpha}=-1$.
\end{lem}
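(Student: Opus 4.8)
The plan is to work in the $(\alpha,\beta)$-plane with the change of variables \eqref{eq:Change}, under which a fixed value of $\lambda$ corresponds to moving along a line of slope $d\beta/d\alpha=-1$ and the parameter $\gamma$ parametrises translation along the anti-diagonal direction. A collision of eigenvalue curves $\lambda(\gamma)$ — whether Type-A or Type-B — occurs precisely when two branches of the pair-spectrum meet; since by Theorem~\ref{thm:SimpleChess}(d) each real branch $\beta(\alpha)$ is strictly monotone decreasing, two real branches can only touch tangentially, and the common tangent at such a touching point is the relevant object to identify.

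First I would set up the relation between $\lambda$, $\gamma$ and the pair $(\alpha,\beta)$: from \eqref{eq:Change} we have $\lambda = \alpha - \beta$ shifted, more precisely $\alpha-\beta = 2\lambda$ up to sign conventions and $\alpha+\beta=-2\gamma$, so that along a branch the eigenvalue is $\lambda = \frac{1}{2}(\alpha-\beta(\alpha))$ and $\gamma = -\frac{1}{2}(\alpha+\beta(\alpha))$. Differentiating with respect to the curve parameter $\alpha$, one gets $d\lambda/d\alpha = \frac{1}{2}(1-\beta')$ and $d\gamma/d\alpha = -\frac{1}{2}(1+\beta')$. The natural object is $d\lambda/d\gamma = (1-\beta')/\big(-(1+\beta')\big)$. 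A collision in the $\lambda(\gamma)$ picture is a point where two branches of $\lambda$ as a function of $\gamma$ meet with a vertical tangent (a fold/turning point in $\gamma$): indeed both Type-A (two real eigenvalues merging into a complex pair) and Type-B (complex pair becoming real) are square-root branch points of $\lambda(\gamma)$, where $d\gamma/d\lambda = 0$, equivalently $d\gamma/d\alpha = 0$ at that point along the branch. Setting $d\gamma/d\alpha = -\frac{1}{2}(1+\beta') = 0$ gives exactly $\beta' = -1$, i.e. $d\beta/d\alpha = -1$.

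The key steps, in order: (1) record the linear relation \eqref{eq:Change} and compute $d\lambda/d\alpha$, $d\gamma/d\alpha$ along a pair-spectrum branch $\beta(\alpha)$ using \eqref{eq:mon}; (2) characterise a collision as a point where, locally, $\lambda$ fails to be a function of $\gamma$ in the usual way — precisely a turning point of $\gamma$ along the branch when the branch is parametrised so as to pass smoothly through the collision — which forces $d\gamma = 0$ there; (3) conclude $1 + \beta' = 0$; (4) check consistency with the two collision types by noting that near such a point $\gamma - \gamma_0 \sim c(\lambda-\lambda_0)^2$, so $\lambda - \lambda_0 \sim \pm\sqrt{(\gamma-\gamma_0)/c}$, giving the two merging branches on one side and the complex-conjugate pair on the other, matching Type-A ($c>0$ direction) and Type-B ($c<0$ direction) in Figure~\ref{fig:collision}.

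The main obstacle is step~(2): making precise the claim that a collision forces $d\gamma/d\alpha = 0$. The subtlety is that $\beta(\alpha)$ as used in Theorem~\ref{thm:SimpleChess} is real and monotone, so on the real locus two branches never collide transversally; collisions are genuinely a phenomenon of the analytic continuation of the characteristic equation \eqref{eq:4.1} into complex $(\alpha,\beta)$ subject to the constraint \eqref{eq:Restriction}. I would therefore phrase it via the characteristic equation: on the constraint surface $\alpha+\beta = $ const the pair-spectrum is (generically) a smooth curve $\lambda(\gamma)$, and an eigenvalue collision is exactly a point where this curve has a branch point in $\gamma$, i.e. where the implicit-function theorem fails for $\gamma \mapsto \lambda$ because $\partial(\text{characteristic equation})/\partial\lambda$ and the constraint together degenerate — and translating this degeneracy through the chain rule using $d\lambda/d\alpha$ and $d\gamma/d\alpha$ yields $1+\beta'=0$. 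Once that bookkeeping is set up the rest is immediate from \eqref{eq:mon}, which guarantees $\beta'$ is finite and negative on the domain of continuity, so the equation $\beta' = -1$ is meaningful exactly there.
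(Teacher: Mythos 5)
Your proposal is correct and follows essentially the same route as the paper: both identify a collision with a critical point of $\gamma$ along the eigenvalue branch, i.e.\ $d\gamma/d\lambda=0$, and then translate this through \eqref{eq:Change} by the chain rule, $d\gamma/d\alpha=-\tfrac12(1+\beta')$ and $d\lambda/d\alpha=\tfrac12(1-\beta')$, to conclude $\beta'=-1$. The only difference is in how the critical-point characterisation is supported: you appeal to the square-root branch-point structure of $\lambda(\gamma)$ at a Type-A/Type-B transition, while the paper differentiates the eigenvalue equation and uses the solvability condition $(-1+\gamma')\Vert\mathbf{u}\Vert^2+(1+\gamma')\Vert\mathbf{v}\Vert^2=0$ before asserting the same fact, both at a comparable level of rigour.
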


\begin{proof}
Consider 
\begin{align*}
\begin{pmatrix}
A-\lambda+\gamma  & C \\
C^*                             & B+\lambda +\gamma
\end{pmatrix}
\begin{pmatrix}
\mathbf{u} \\
\mathbf{v}
\end{pmatrix}
= \mathbf{0}.
\end{align*}
Considering the curves $\gamma(\lambda)$ instead of $\lambda(\gamma)$ and differentiating with respect to $\lambda$ we arrive at 
\begin{equation}\label{eq:gammadiff}
\begin{pmatrix}
-1+\gamma'  &   \\
                     & 1+\gamma'
\end{pmatrix}
\begin{pmatrix}
\mathbf{u} \\
\mathbf{v}
\end{pmatrix}
= -\begin{pmatrix}
A-\lambda+\gamma  & C \\
C^*                            & B+\lambda +\gamma
\end{pmatrix}
\begin{pmatrix}
\mathbf{u'} \\
\mathbf{v'}
\end{pmatrix},
\end{equation}
which is solvable if and only if the right hand side of \eqref{eq:gammadiff} is perpendicular to $\begin{pmatrix} \mathbf{u} \\  \mathbf{v}  \end{pmatrix}$. Therefore multiplying by $\begin{pmatrix}\mathbf{u}\\ \mathbf{v} \end{pmatrix}$ we obtain
\begin{align*}
(-1+\gamma') \Vert \mathbf{u} \Vert ^2+(1+\gamma') \Vert \mathbf{v} \Vert^2 = 0 
\quad
 &\Leftrightarrow 
\quad
\gamma' (\Vert \mathbf{u} \Vert ^2+ \Vert \mathbf{v} \Vert ^2)= \Vert \mathbf{u} \Vert ^2-\Vert \mathbf{v} \Vert ^2 \\
& \Leftrightarrow
\quad
\gamma'=0,
\end{align*}
and since at critical points $\frac{d \gamma}{d\lambda}=0$, using \eqref{eq:Change} we obtain 
\begin{align*}
\frac{d \gamma}{d\lambda}=
\frac{\frac{d \gamma}{d\alpha}}{\frac{d\lambda}{d\alpha}}=\frac{-\frac{1}{2}-\frac{1}{2}\beta^{'}}{\frac{1}{2}-\frac{1}{2}\beta^{'}}=0\quad\Leftrightarrow\quad\beta^{'}=-1. 
\tag*{\qedhere} 
\end{align*}
\end{proof}


\subsection*{Acknowledgments}

We are grateful to E. Brian Davies for useful suggestions. 
The second author acknowledges the financial support by the Ministry of National Education of the Republic of Turkey.

\end{document}